\documentclass[a4paper,12pt]{article}

\usepackage[english]{babel}

\usepackage[a4paper,top=2.5cm,bottom=2.5cm,left=3cm,right=3cm,marginparwidth=1.75cm]{geometry}

\usepackage{amsmath, amssymb, amsthm}
\usepackage{amscd}
\usepackage{extarrows}

\usepackage{float}
\usepackage{graphicx}
\usepackage{xcolor}

\usepackage[colorlinks=true, allcolors=blue]{hyperref}

\newtheorem*{claim}{Claim}

\newtheorem{lemma}{Lemma}[section]
\newtheorem{proposition}{Proposition}[section]
\newtheorem{definition}{Definition}[section]
\newtheorem{example}{Example}
\newtheorem{operation}{Operation}

\newtheorem*{maintheorem}{Main Theorem}
\newtheorem*{remark}{Remark}
\newtheorem*{note}{Note}

\title{Building Foliations from Heegaard Diagrams}
\author{Shangjun Shi\thanks{School of Mathematical Sciences, 
  East China Normal University, Shanghai 200241, China. \texttt{52280155010@stu.ecnu.edu.cn}}
  \and 
  Yanqing Zou\thanks{School of Mathematical Sciences, 
  Key Laboratory of MEA (Ministry of Education) \& 
  Shanghai Key Laboratory of PMMP, 
  East China Normal University, Shanghai 200241, China.
  \texttt{yqzou@math.ecnu.edu.cn}}}

\begin{document}
\maketitle

\begin{abstract}
Every closed orientable $3$-manifold admits both a Heegaard splitting and a coorientable codimension-one foliation.  We address the \emph{foliation realization problem}: constructing a foliation directly from a Heegaard diagram of arbitrary genus.

Using Gabai's sutured manifold theory, we introduce the notion of a \emph{meridional sutured handlebody} and prove that every such handlebody decomposes, via basic sutured decomposition operations, into a Reeb 
component together with product disks.  

We then present a three-step construction: (i)~building two meridional sutured handlebodies from a given Heegaard diagram, (ii)~gluing them along compatible disk regions by reversing disk decompositions, and 
(iii)~filling the remaining cavities with a meridional sutured handlebody and some Reeb components.  The construction applies to Heegaard diagrams of any genus.
\end{abstract}

\section{Introduction}

Every closed orientable $3$-manifold admits a Heegaard splitting~\cite{Bing1959,Heegaard1898,Moise1952}, a coorientable codimension-one foliation~\cite{Lickorish1965, Novikov1965, Thurston1976}, and an open book decomposition~\cite{Alexander1923}.  A long-standing theme in geometric topology is the direct construction of one structure from another. Two directions are classical: an open book decomposition induces both a foliation (by extending the mapping torus with Reeb components along the binding) and a Heegaard splitting (by splitting the pages or thickening the binding neighbourhood).

The obstruction is structural. Casson and Gordon~\cite{CassonGordon1987} introduced \emph{strongly
irreducible} Heegaard splittings, showing that weak reducibility forces an incompressible surface---a property that can then be exploited, via Gabai's work~\cite{Gabai1983}, to obtain a taut foliation.  On the other side of the rigidity spectrum, Ozsv{\'a}th and Szab{\'o}~\cite{OzsvathSzabo2004} proved that an $L$-space admits no coorientable taut foliation, so the $\alpha$/$\beta$ curve configuration on a Heegaard diagram can obstruct tautness altogether.  These results indicate that the relationship between Heegaard diagram combinatorics and foliation geometry is both deep and subtle.

The \emph{foliation realization problem}---synthesising a foliation from Heegaard diagram data---has seen significant recent progress. Li~\cite{Li2022} constructed taut foliations directly from genus-two Heegaard diagrams using branched surfaces and left-orderability of the fundamental group; he noted the difficulty of generalising the required spatial deformations to higher genera.

In this paper we take a different, genus-independent approach, using Gabai's sutured manifold theory~\cite{Gabai1983} as the engine.  We introduce the \emph{meridional sutured handlebody}, a sutured manifold whose sutures are prescribed by a disk system in a handlebody.  We prove (Proposition~\ref{prop:msh-foliation}) that every such handlebody decomposes, via basic sutured decomposition operations, into one Reeb component together with product disks.  The proof is a purely combinatorial induction on a complexity measure.

This leads to a three-step construction:
\begin{enumerate}
  \item Translate a Heegaard diagram into two meridional sutured handlebodies;
  \item Glue them along compatible disk regions by reversing disk decompositions;
  \item Fill the remaining cavities with a meridional sutured handlebody and some Reeb components.
\end{enumerate}
The construction applies to Heegaard diagrams of arbitrary genus, and each step involves only finitely many choices.

Our main result is:

\begin{maintheorem}\label{thm:main}
  Any closed, connected, orientable $3$-manifold admits a coorientable foliation constructed directly from its Heegaard diagram via the above three steps. 
\end{maintheorem}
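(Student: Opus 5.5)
The plan is to follow the three-step construction outlined above. The engine is Proposition~\ref{prop:msh-foliation}, together with Gabai's theorem that a sutured manifold hierarchy can be reversed to build foliations. The hierarchy runs through basic decomposition operations and ends in Reeb components plus product pieces. The foliation is then assembled by gluing leaves back across the decomposing disks in reverse order. At each stage the foliation is tangent to $R_\pm$ and transverse to the sutures $\gamma$.

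\textbf{Step (i).} Fix a Heegaard diagram $(\Sigma,\alpha,\beta)$ of genus $g$ for $M=H_\alpha\cup_\Sigma H_\beta$.
\begin{enumerate}
\item In $H_\alpha$, take the disk system bounded by the $\alpha$ curves. Its boundary data prescribes sutures on $\partial H_\alpha$, making $H_\alpha$ a meridional sutured handlebody. Do the same for $H_\beta$ using the $\beta$ curves.
\item Apply Proposition~\ref{prop:msh-foliation} to each. This produces decompositions ending in a single Reeb component and product disks $D^2\times I$. Each product disk carries the product foliation by $D^2\times\{t\}$, and the Reeb component carries its Reeb foliation.
\item Reverse the disk decompositions. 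This yields a foliation $\mathcal F_\alpha$ on $H_\alpha$ and $\mathcal F_\beta$ on $H_\beta$, each tangent to $R_\pm$ and transverse to $\gamma$. Coorientability holds because every sutured manifold in the hierarchy carries the orientation of $R_+$ versus $R_-$, and reversal respects that orientation.
\end{enumerate}

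\textbf{Step (ii).} Glue $H_\alpha$ and $H_\beta$ along $\Sigma$.
\begin{enumerate}
\item Choose regions of $\Sigma$ whose $R_+$/$R_-$ labels from the two sides are compatible. On the leaves, $R_+$ of one side must meet $R_-$ of the other, so that the coorientations agree. Such regions are unions of complementary disks of $\alpha\cup\beta$.
\item Glue along these regions. This is exactly a reversed disk decomposition of the glued manifold, so by Gabai's reversal the foliations extend across, provided the suture arcs match transversely.
\item The incompatible regions are left unglued and form cavities. Each cavity is bounded by pieces of $R_\pm$ from both sides and by annular sutures.
\end{enumerate}

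\textbf{Step (iii).} Fill the cavities.
\begin{enumerate}
\item Show that the complement $M\setminus(\text{glued part})$ is a union of handlebodies whose induced sutured structure is again meridional. The sutures come from arcs of $\alpha\cup\beta$, which bound meridian disks in the cavity.
\item Fill each such handlebody using Proposition~\ref{prop:msh-foliation} again.
\item Any remaining solid-torus cavities whose boundary is a single torus leaf are filled with Reeb components. The needed turbulization is along cores transverse to the foliation; the slope of the resulting Reeb component can be chosen to match the boundary leaf.
\item The result is a coorientable foliation on the closed manifold $M$, which proves the Main Theorem.
\end{enumerate}

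\textbf{Main obstacle.} I expect the hardest part to be step (ii)/(iii): the combinatorial verification that one can always choose the compatible regions so that the leftover cavity really is a meridional sutured handlebody. This means establishing connectedness or handlebody-ness and that the sutures are meridional, with $R_\pm$ orientations consistent. I would first try a choice where the compatible regions are exactly the components of $\Sigma\setminus(\alpha\cup\beta)$ on which the two $\pm$ labels are opposite. I would then check, by an Euler characteristic count on $\Sigma$, that the remaining regions form disk-region systems whose union with the sutures is a handlebody with a meridian disk system. If this fails globally, I would fall back on extra Reeb components to absorb the bad cavities. Such components are always available, since any torus boundary leaf can be capped off.
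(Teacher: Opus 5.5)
Your outline follows the paper's three steps, and your gluing rule (glue exactly where one side carries $R_+$ and the other $R_-$) is the paper's Type~II rule. But the steps where the real work happens are either wrong as stated or left open.

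\emph{Step~(i).} The $g$ curves $\alpha_i$ alone do not give a sutured structure. Cutting $\partial H_\alpha$ along them leaves one connected $2g$-holed sphere, and each nonseparating $\alpha_i$ has that same region on both sides. So no labelling of regions by $R_\pm$ makes every suture separate $R_+$ from $R_-$. The paper first enlarges to $g$ nonseparating and $g-1$ separating curves. It then replaces each nonseparating curve by the two parallel copies $\partial N(\alpha_i)$, which makes an alternating $R_\pm$ labelling possible. One still has to check $\chi(R_+)=\chi(R_-)$ and the no-isolated-vertex condition of Definition~\ref{def:msh}. The paper also shrinks the handlebodies to $V',W'$ off a collar $\Sigma\times I$ and glues across $d_0\times I$; gluing $H_\alpha$ and $H_\beta$ ``along $\Sigma$'' leaves no room for cavities.

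\emph{The main obstacle.} The step you flag is precisely the paper's Claim, and its proof is combinatorial rather than an Euler characteristic count. Each suture of the cavity $H$ meets the crossing set only at $p\times\{0\}$ and $p\times\{1\}$ for a single crossing $p\in\alpha'\cap\beta'$. It therefore splits into two arcs which, together with $p\times I$, bound two disks meeting only along $p\times I$; their union is a meridian disk. Cutting along these disks leaves balls in bijection with the Type~I regions, so $\Gamma(H,D(\gamma_H))$ has the Type~I regions as vertices and the crossings as edges. Every vertex has degree $\ge 2$ because each complementary polygon has an even number of sides, and $\chi(R_+(\gamma_H))=\chi(R_-(\gamma_H))$ is inherited from $X$. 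Your fallback does not work: a Reeb component can only fill a solid torus whose boundary is already a leaf, not an arbitrary bad cavity.

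\emph{The final assembly} is also missing its key point. The foliation of $H$ from Proposition~\ref{prop:msh-foliation} cannot be glued directly to that of $X$. The $R$-parts match, since $R_\mp(\gamma_H)$ faces $R_\pm(\gamma_X)$. But along each suture annulus $A\subset\gamma_X$, the foliation of $X$ is cooriented toward the $R_+(\gamma_X)$ side of $A$, while that of $H$ is cooriented toward the $R_-(\gamma_X)$ side. The paper therefore keeps a collar $\partial X\times I$ and foliates $R(\gamma_X)\times I$ as a product. Then $\hat X=X\cup H\cup(R(\gamma_X)\times I)$ has boundary the tori $\partial(A\times I)$, transverse to the foliation. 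It spins along these tori and fills each solid torus $A\times I$ with a Reeb component. These solid tori are the only ``remaining cavities.'' They do not arise from turbulizing along transverse cores: the foliation induced on $\partial(A\times I)$ runs parallel to the sutures, hence parallel to the core, so it is the meridian, not the core, that is transverse.
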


\begin{note}
1. To the best of our knowledge, this is the first such construction for Heegaard diagrams of arbitrary genus.

2. The construction does not yield a taut foliation: the decomposition terminates at a Reeb component, whose sutured Floer homology vanishes.
\end{note}

The paper is organised as follows.  Section~2 recalls the necessary background on Heegaard diagrams and sutured manifolds. Section~3 introduces meridional sutured handlebodies and proves Proposition~\ref{prop:msh-foliation}. Section~4 proves the Main Theorem by carrying out the three-step construction.

\subsection*{Acknowledgements}
We have learned a lot in the virtual online weekly seminar for foliation theory organized by Yao Fan, Biao Ma and Bin Yu. We thank Tao Li, Yi Liu,  Ruifeng Qiu and Jiajun Wang for many helpful discussions; Ian Agol and David Gabai for many helpful suggestions during the CRM Conference---Knots, Groups, and Manifolds---held in 2025, 
and Steven Boyer for the invitation and hospitality.  This work was partially supported by the National Key R\&D program of China (2025YFA1017500), NSFC 12131009 \& 12471065, and in part by Science and Technology Commission of Shanghai Municipality (No.\ 22DZ2229014).

\section{Preliminaries}

\subsection{Heegaard Diagrams}

A \textit{handlebody} of genus~$g$ is obtained by attaching $g$ $1$-handles to a $3$-ball.  Two handlebodies of the same genus are homeomorphic.

\begin{definition}\label{def:heegaard-splitting}
Let $M$ be a closed, connected, oriented $3$-manifold. A \textit{Heegaard splitting} of genus $g$ is a decomposition $M = H_1\cup_{\Sigma} H_2$, where $H_1$ and $H_2$ are handlebodies of genus~$g$, and $\Sigma = \partial H_1 = \partial H_2$ is the \textit{Heegaard surface} (more precisely, $M$ is obtained from $H_1\sqcup H_2$ by a homeomorphism $\partial H_1\to\partial H_2$).
\end{definition}

A handlebody is determined up to homeomorphism by a complete collection of properly embedded, pairwise disjoint, non-separating essential disks whose boundaries cut the surface into a planar surface; such a collection is called a \textit{complete meridian system}.  A Heegaard splitting is therefore encoded by the boundary curves of these disks.

\begin{definition}\label{def:heegaard-diagram}
  A \textit{Heegaard diagram} is a triple $(\Sigma,\alpha,\beta)$, where $\Sigma$ is a closed oriented surface of genus~$g$, and
  $\alpha = \{\alpha_1,\dots,\alpha_g\}$ and $\beta  = \{\beta_1,\dots,\beta_g\}$ are complete meridian systems for $H_1$ and $H_2$ respectively.  Thus:
  \begin{enumerate}
    \item The curves in $\alpha$ are pairwise disjoint and non-parallel, and likewise for $\beta$.
    \item Cutting $\Sigma$ along $\alpha$ (resp.\ $\beta$) yields a $2$-sphere with $2g$ boundary components.
    \item Each $\alpha_i$ (resp.\ $\beta_i$) bounds a properly embedded essential disk in $H_1$ (resp.\ $H_2$).
  \end{enumerate}
\end{definition}

In Section~4 we will enlarge a given diagram to a \textit{complete-and-separating} diagram $D_0$ containing both
non-separating and separating curves, for which $\Sigma\setminus D_0$ is a union of open disks; this is a convenient working diagram obtained by adding curves to a Heegaard diagram.

\subsection{Sutured Manifolds}

We recall Gabai's sutured manifold theory~\cite{Gabai1983}.  All manifolds in this paper are compact and
oriented.

\begin{definition}[Gabai~\cite{Gabai1983}]\label{def:sutured-manifold}
  A \textit{sutured manifold} $(M,\gamma)$ consists of a $3$-manifold~$M$ and a collection $\gamma\subset\partial M$ of disjoint annuli
  $A(\gamma)$ and tori $T(\gamma)$.  Each annulus contains a \textit{suture}---an oriented, homologically non-trivial simple
  closed curve; the set of sutures is denoted $s(\gamma)$. The complement $R(\gamma)=\partial M\setminus\operatorname{Int}(\gamma)$  decomposes as $R_+(\gamma)\sqcup R_-(\gamma)$, where $R_+(\gamma)$ is oriented so that its normal vector points out of~$M$ and $R_-(\gamma)$ points into~$M$; these orientations are consistent with those of $s(\gamma)$ along $\partial R(\gamma)$. For convenience, \(R_+(\gamma)\) and \(R_-(\gamma)\) are sometimes abbreviated to \(R_+\) and \(R_-\), respectively.
\end{definition}

A sutured manifold $(M,\gamma)$ \textit{admits a co-oriented codimension-one foliation} $\mathcal{F}$ if:
\begin{enumerate}
  \item $\mathcal{F}$ is transverse to $\gamma$;
  \item $\mathcal{F}$ is tangent to $R(\gamma)$, with leaves near $R_+(\gamma)$ pointing outward and near $R_-(\gamma)$ pointing inward, matching the orientations in Definition~\ref{def:sutured-manifold};
  \item $\mathcal{F}|_{\gamma}$ contains no Reeb components.
\end{enumerate}

The following two sutured manifolds are the fundamental building blocks (see Figure~\ref{fig_1_1_product_disk_and_reeb}; our convention: red/purple~=~sutures, blue~=~$R_+(\gamma)$, green~=~$R_-(\gamma)$).

\begin{example}\label{ex:product-disk}
  \textbf{Product disk.} $M = D^2\times[0,1]$, with $\gamma = \partial D^2\times[0,1]$ (a single annular suture). The leaves are $D^2\times\{\text{pt}\}$, $R_+ = D^2\times\{1\}$, $R_- = D^2\times\{0\}$.
\end{example}

\begin{example}\label{ex:reeb}
  \textbf{Reeb component.} A solid torus with a foliation consisting of planar leaves spiralling toward the boundary torus.  Here $\gamma=\varnothing$; the boundary is either $R_+$ or $R_-$.
\end{example}

\begin{figure}[H]
  \centering
  \includegraphics[width=0.6\linewidth]{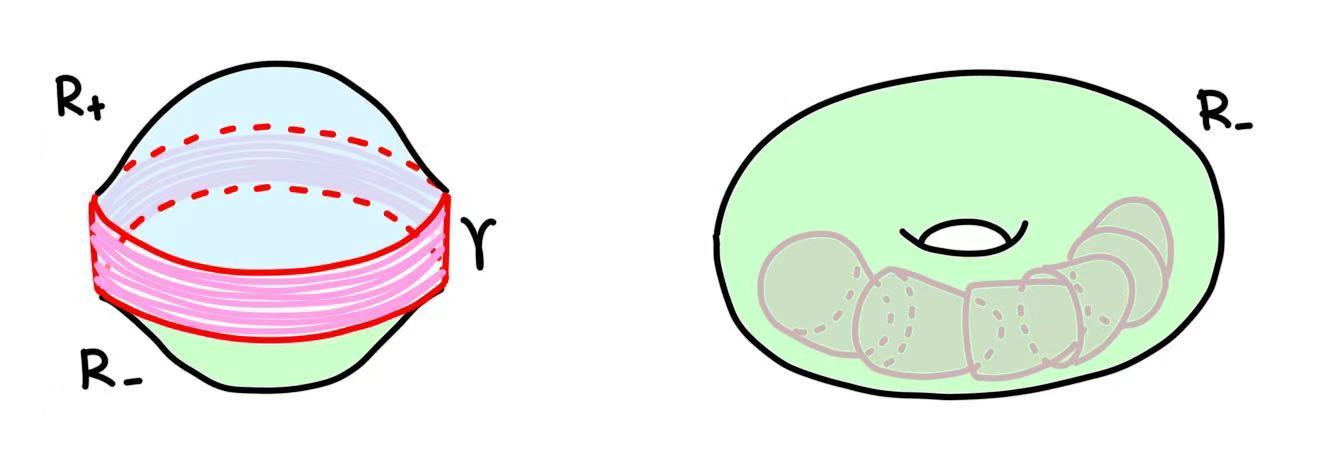}
  \caption{Product disk and Reeb component}
  \label{fig_1_1_product_disk_and_reeb}
\end{figure}

The definition of sutured manifold decomposition is introduced below.

\begin{definition}[Gabai~\cite{Gabai1983}]
\label{def:sutured manifold decomposition}
    Let $(M,\gamma)$ be a sutured manifold and $S$ a properly embedded surface in $M$ such that for every component $\lambda$ of $S\cap\gamma$ one of (1)--(3) holds:
\begin{enumerate}
    \item $\lambda$ is a properly embedded nonseparating arc in $\gamma$.
    \item $\lambda$ is a simple closed curve in an annular component $A(\gamma)$ of $\gamma$ in the same homology class as $A(\gamma)\cap s(\gamma)$.
    \item $\lambda$ is a homotopically nontrivial curve in a toral component $T(\gamma)$ of $\gamma$, and if $\delta$ is another component of $T(\gamma)\cap S$, then $\lambda$ and $\delta$ represent the same homology class in $H_1(T(\gamma))$.
\end{enumerate}
The surface $S$ defines a sutured manifold decomposition
\[
(M,\gamma)\stackrel{S}{\leadsto}(M',\gamma'),
\]
where $M' = M \setminus \operatorname{int}(N(S))$ and
\[
\gamma' = \bigl(\gamma\cap M'\bigr)
           \cup N(S'_+\cap R_-(\gamma))
           \cup N(S'_-\cap R_+(\gamma)),
\]
\[
R'_+(\gamma') = \bigl((R_+(\gamma)\cap M')\cup S'_+\bigr)\setminus \operatorname{int}(\gamma'),
\]
\[
R'_-(\gamma') = \bigl((R_-(\gamma)\cap M')\cup S'_-\bigr)\setminus \operatorname{int}(\gamma'),
\]
where $S'_+$ (resp.\ $S'_-$) is that component of $\partial N(S)\cap M'$ whose normal vector points out of (resp.\ into) $M'$.
\end{definition}

We now introduce three basic decomposition operations that simplify sutured manifolds. Throughout the subsequent discussion, we only consider the case (1) for the intersection \(S\cap\gamma\) in the decomposition definition~\ref{def:sutured manifold decomposition}. A foliation on the decomposed manifold can be lifted back to the original one by Lemma~\ref{lemma:decomposition}. In the figures below, $\longrightarrow$ (black) denotes a decomposition, while $\longleftarrow$ (red) indicates that a foliation can be reconstructed from the decomposed manifold.

\begin{operation}\label{op:trivial}
  \textbf{Trivial decomposition.} Suppose there exists a properly embedded disk \(D\subset M\) such that $(M,\gamma)\stackrel{D}{\leadsto}(M',\gamma')$, where \(|\partial D \cap s(\gamma)|=2\), and \(D\) is non-separating, i.e., the manifold \(M\setminus N(D)\) is connected. See Figure~\ref{fig_1_2_operation_1}.
\end{operation}

\begin{figure}[H]
  \centering
  \includegraphics[width=0.8\linewidth]{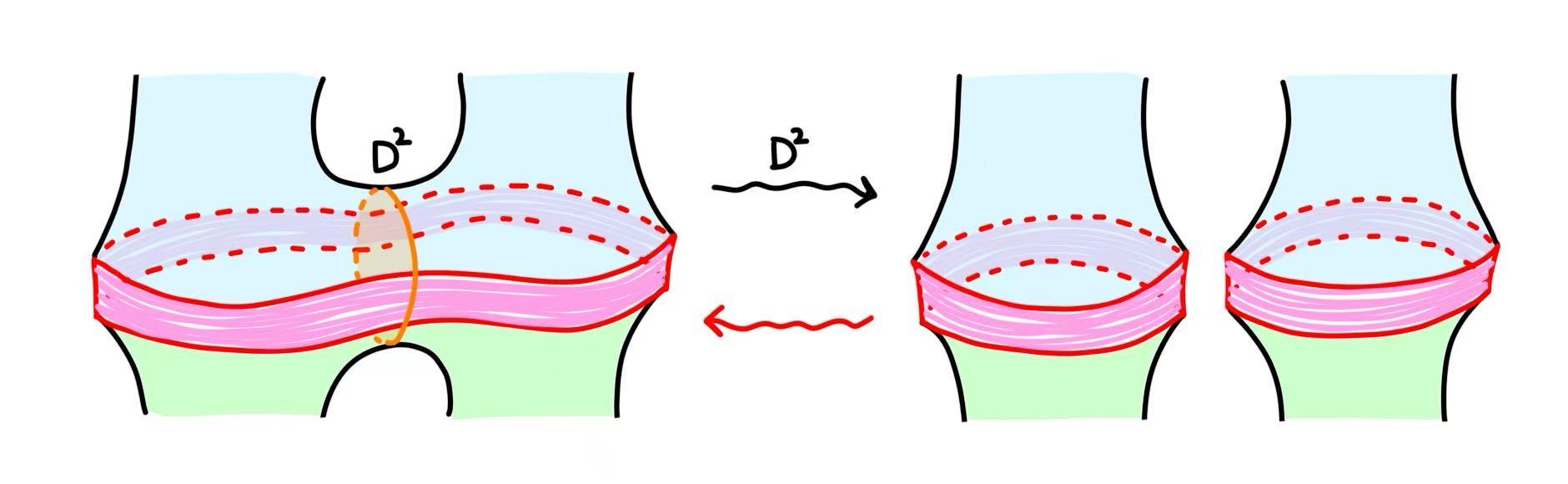}
  \caption{Trivial decomposition}
  \label{fig_1_2_operation_1}
\end{figure}

\begin{operation}\label{op:disk}
 \textbf{Disk decomposition.} Suppose there exists a properly embedded disk $D\subset M$ such that $(M,\gamma)\stackrel{D}{\leadsto}(M',\gamma')$, where $|\partial D \cap s(\gamma)|=2n,n\ge 1$. See Figure~\ref{fig_1_3_operation_2}.
\end{operation}

\begin{figure}[H]
  \centering
  \includegraphics[width=0.8\linewidth]{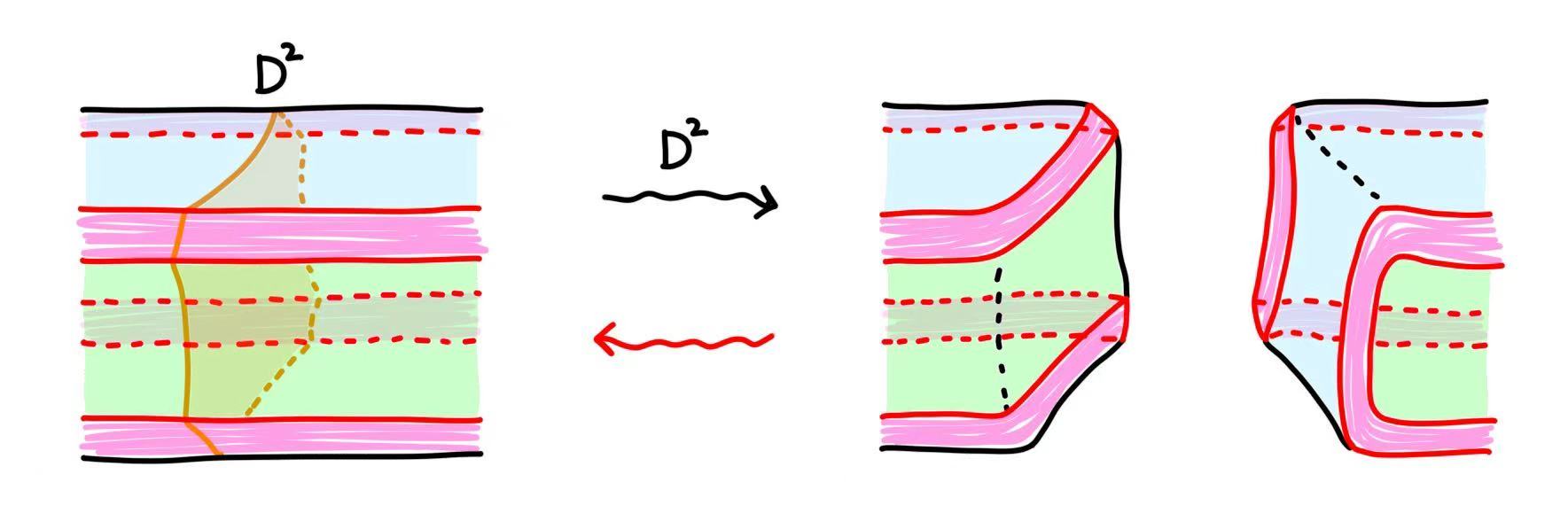}
  \caption{Disk decomposition}
  \label{fig_1_3_operation_2}
\end{figure}

A trivial decomposition is a special case of a disk decomposition; we distinguish it because it appears frequently.

\begin{example}\label{ex:monkey-saddle}
\textbf{Solid torus with a monkey saddle.} This is a sutured solid torus with \(2n\) longitudinal sutures for \(n\ge 1\), such that \(s(\gamma)\) meets each meridian in \(2n\) points. It can be reduced to a product disk by one disk decomposition (cutting along a meridian disk disjoint from \(s(\gamma)\)); hence its foliation is inherited from the product disk. Figure~\ref{fig_1_4_example_3} shows the case \(n=2\).
\end{example}

\begin{figure}[H]
  \centering
  \includegraphics[width=0.8\linewidth]{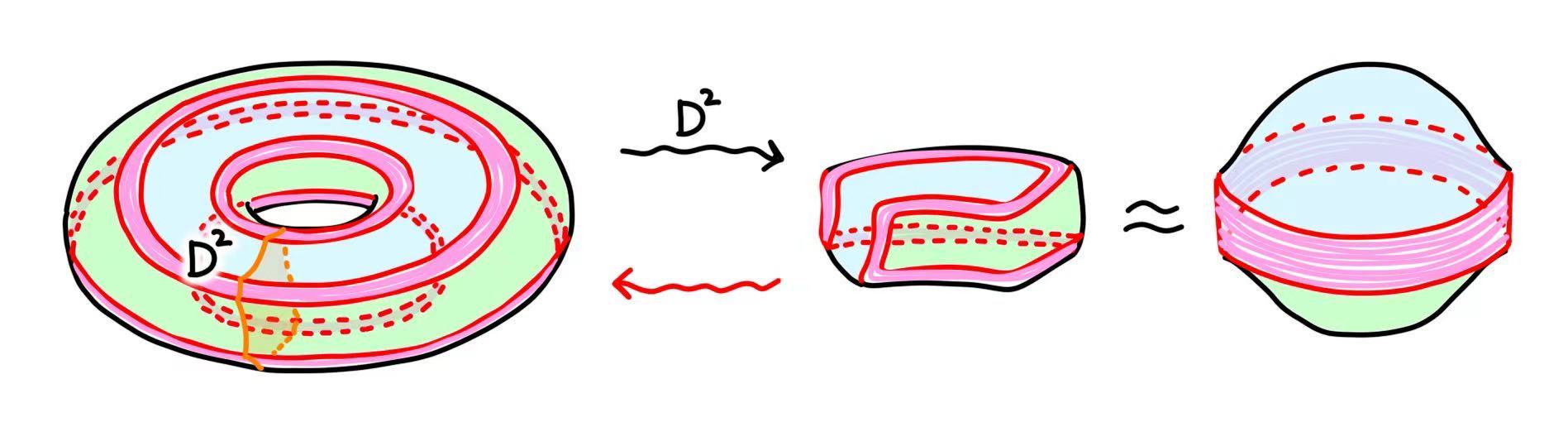}
  \caption{The solid torus with a monkey saddle}
  \label{fig_1_4_example_3}
\end{figure}

\begin{operation}\label{op:annular}
  \textbf{Annular decomposition.} Suppose there exists a properly embedded \emph{boundary-parallel} annulus \(A\subset M\) such that $(M,\gamma)\stackrel{A}{\leadsto}(M',\gamma')$, where both components of \(\partial A\) lie in \(R_{+}\) or both lie in \(R_{-}\). See Figure~\ref{fig_1_5_operation_3}.
\end{operation}

\begin{figure}[H]
  \centering
  \includegraphics[width=0.7\linewidth]{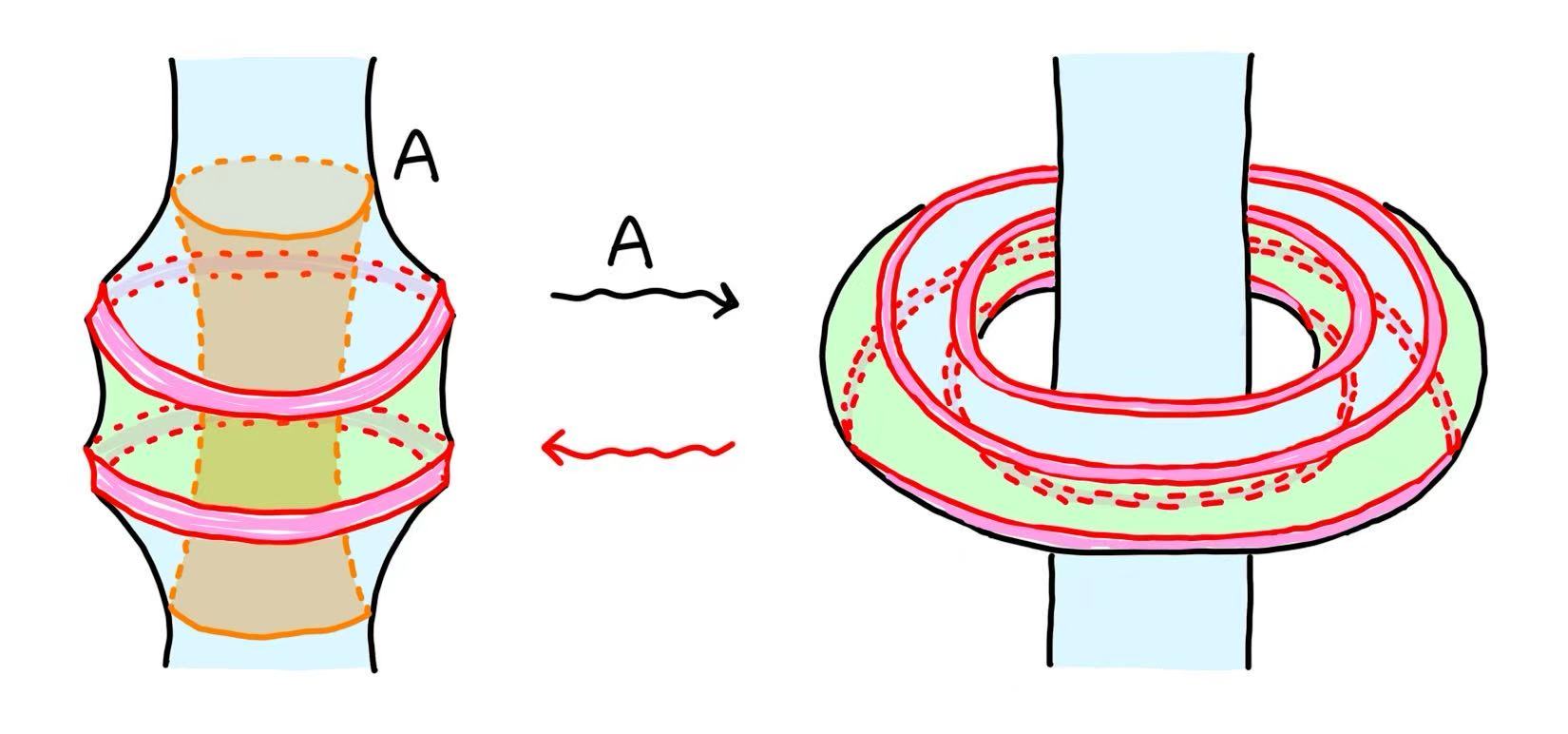}
  \caption{Annular decomposition}
  \label{fig_1_5_operation_3}
\end{figure}

An annular decomposition separates a monkey saddle solid torus (Example~\ref{ex:monkey-saddle}) from the remainder.  Conversely, if a monkey saddle solid torus carries a foliation and is attached to a Reeb component along a boundary annulus, its foliation can be adjusted to extend across the gluing.

\begin{example}\label{ex:meridional-solid-torus}
  \textbf{Meridional sutured solid torus.}  A solid torus with $2n$ meridional sutures, $n\ge 1$, each bounding a meridian disk. See Figure~\ref{fig_1_6_example_4} for the case $n=1$.  The same decomposition applies for any~$n$.
\end{example}

\begin{figure}[H]
  \centering
  \includegraphics[width=0.8\linewidth]{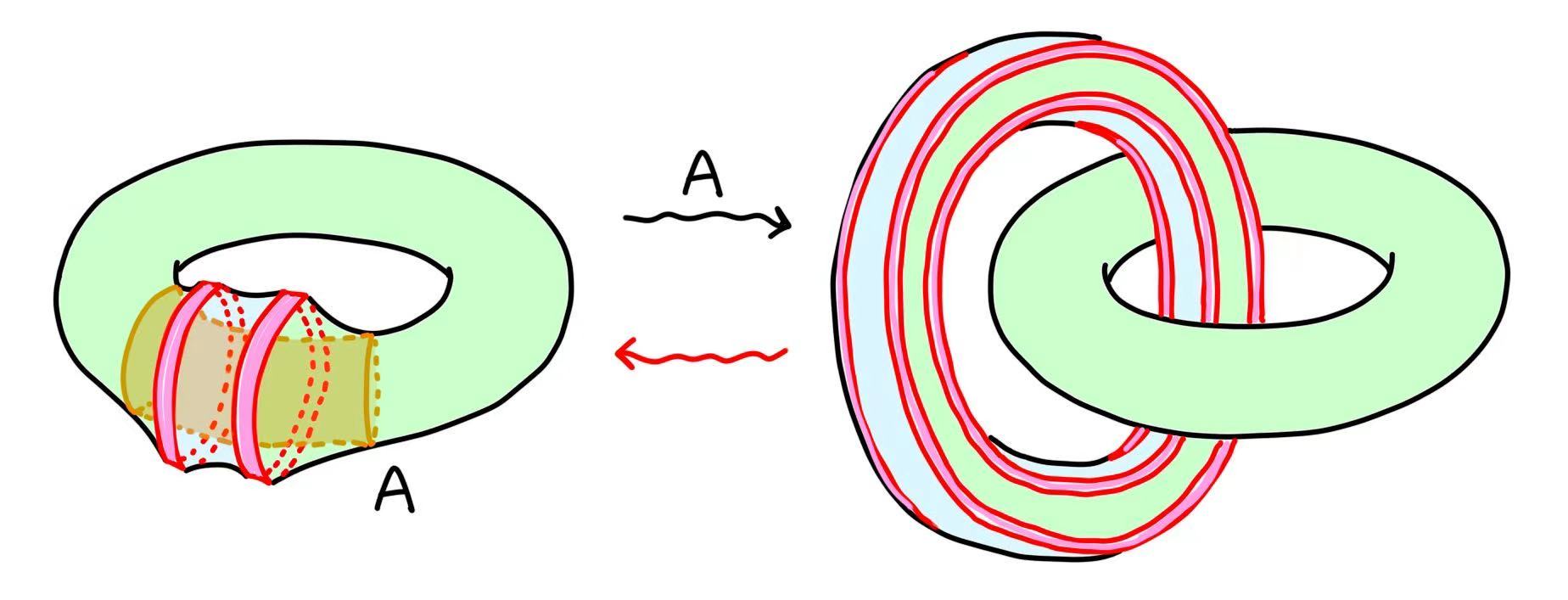}
  \caption{A meridional sutured solid torus}
  \label{fig_1_6_example_4}
\end{figure}

\begin{lemma}[Gabai~\cite{Gabai1983}]\label{lemma:decomposition}
  For a sutured manifold decomposition $(M,\gamma)\overset{S}{\leadsto}(M',\gamma')$, if $(M',\gamma')$ admits a foliation, then $(M,\gamma)$ also admits a foliation.
\end{lemma}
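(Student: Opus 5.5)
The argument is local: reconstruct $\mathcal{F}$ on $M$ from a foliation $\mathcal{F}'$ of $(M',\gamma')$ by working only near the cut. Write $N(S)\cong S\times[-1,1]$ with $S'_-=S\times\{-1\}$ and $S'_+=S\times\{1\}$, so that $M=M'\cup_{S'_+\cup S'_-}N(S)$. By the definition of $\gamma'$, the surfaces $S'_\pm$ lie, away from small neighbourhoods of their boundaries, in $R'_\pm(\gamma')$. Hence $\mathcal{F}'$ is tangent to them, and the copies of $S$ are leaves. The plan is to foliate $N(S)$ by the product foliation $S\times\{t\}$ and to check that the two pieces fit together, after modifying $\mathcal{F}'$ near $\partial S'_\pm$, into a cooriented foliation of $M$. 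It must be tangent to $R(\gamma)$, transverse to $\gamma$, and have no Reeb components on $\gamma$.

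First, on the interior of $S$ the gluing is immediate. Each $S\times\{t\}$ is a leaf. The coorientation of $\mathcal{F}'$ along $S'_+$ points out of $M'$, i.e.\ into $N(S)$, and along $S'_-$ it points into $M'$. This matches the product coorientation by the $[-1,1]$ direction. So over $\operatorname{int}(S)$ the union of $\mathcal{F}'$ with the product foliation is a cooriented foliation, after smoothing by the standard holonomy-preserving collaring argument.

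Second, I would analyse the boundary $\partial S$. It meets $R_\pm(\gamma)$ in arcs or curves, and meets $\gamma$ according to cases (1)--(3). Near a point of $\partial S\cap R_+(\gamma)$, the new suture $N(S'_-\cap R_+(\gamma))$ separates the leaf $S'_-\subset R'_-$ from $R_+(\gamma)\cap M'\subset R'_+$, and $\mathcal{F}'$ is transverse to this annulus strip. Regluing $N(S)$ there, the leaves of $\mathcal{F}'$ crossing the strip spiral or turn near the corner. I would replace them by leaves bending around the edge of $S\times[-1,1]$, so that $R_+(\gamma)$ becomes a single leaf on both sides of $\partial S$. This is Gabai's corner-rounding model: a standard $2$-dimensional picture times an interval along $\partial S$, in which a foliation of a quadrant that is tangent to one edge and transverse to the other is extended across a half-disk. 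The case of $R_-(\gamma)$ is symmetric.

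Third, I would treat $\partial S\cap\gamma$. In case (1) the arcs are transverse to $\gamma$, and the product foliation of $N(S)$ restricted to $\gamma$ is by arcs parallel to $\lambda$. These glue to $\mathcal{F}'|_{\gamma\cap M'}$ without creating Reeb components, because a Reeb annulus on $\gamma$ would already appear in $\mathcal{F}'|_{\gamma'}$. Only case (1) is used in this paper, so cases (2) and (3) can be cited from Gabai. In those cases $\lambda$ is parallel to the suture or core, and one spins the leaves near $\lambda$.

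The main obstacle is the corner analysis in the second step. There one must check that the local modifications are consistent with the coorientations, which is where the sign conventions defining $\gamma'$ enter. One must also check that they introduce no Reeb components in $\mathcal{F}|_\gamma$. I would handle it by the explicit local model in a neighbourhood of $\partial S$, which is $D^2$-sector $\times$ interval, verifying the finitely many sign cases directly.
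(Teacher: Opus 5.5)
The paper does not prove this lemma; it quotes it from Gabai. Your argument therefore has to stand on its own, and as written it has a genuine gap in the corner step.

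\textbf{The local corner model cannot work in general.} You claim $\mathcal{F}$ can be obtained by modifying $\mathcal{F}'$ only near $\partial S'_\pm$, using a model of the form (planar picture)$\times$(interval along $\partial S$). For arc components of $\partial S\cap R_+(\gamma)$ a planar extension across the half-disk does exist, since the normal field has total rotation zero around its boundary. But that extension forces every leaf coming from $N(S)$ or from the $S'_-$ side to leave along $R_+(\gamma)$ on the $S'_+$ side. When $\partial S$ has a closed component $c\subset R_+(\gamma)$ (exactly the situation of the annular decomposition, Operation~3), no modification supported near $\partial S$ can work in general. To see this, let $c_\pm$ be push-offs of $c$ into $R_+(\gamma)$ on the $S'_\pm$ sides, outside the modified region. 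They are freely homotopic in the boundary leaf containing $c$, so their holonomy germs must be conjugate. Yet these germs are, respectively:
\begin{itemize}
\item the holonomy of the component of $R'_+(\gamma')$ containing $S'_+$, along the corresponding component of $\partial S'_+$;
\item the holonomy of the component of $R'_+(\gamma')$ bordering the new suture $N(S'_-\cap R_+(\gamma))$, along that suture.
\end{itemize}
These two are unrelated data.

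\textbf{A concrete example.} Cut the product $(A\times I,\partial A\times I)$ along an annulus $S$ with $\partial S\subset A\times\{1\}$, parallel to an essential subannulus $A_0\subset A\times\{1\}$. The piece $A_0\times I$ receiving $S'_-$ can be foliated as the suspension of a homeomorphism of $[0,1]$ fixing only $0$ and $1$. This foliation is tangent to $A_0\times\{0,1\}$, transverse to the two sutures, and spirals on them without Reeb annuli. Foliate the other piece by the product foliation. Then the core of $A\times\{1\}$ would have nontrivial holonomy on one side of $c$ and trivial holonomy on the other, a contradiction.

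So a global step is missing. You must first renormalise or re-choose $\mathcal{F}'$ near $R(\gamma')$ and near the new sutures. You must then let the leaves that cross $S$ near $\partial S$ spiral around $R(\gamma)$, as in Gabai's spinning construction. That is where the real content of the lemma lies.

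\textbf{Your treatment of $\gamma$ is also incorrect as stated.} The components of $R(\gamma)$ are leaves, so the circles $\partial\gamma$ are leaves of $\mathcal{F}|_\gamma$. The arcs $\lambda\times\{t\}$ of the product foliation meet $\partial\gamma$ transversally. So the endpoints of each $\lambda$ also need the corner model, compatibly with the model along $\partial S\cap R_\pm(\gamma)$.

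Moreover, absence of Reeb annuli is not inherited from $\mathcal{F}'|_{\gamma'}$. An annulus of $\gamma$ is assembled from rectangles of (possibly several) annuli of $\gamma'$ together with the strips $N(\lambda)$. The strips $N(S'_\mp\cap R_\pm(\gamma))$ of $\gamma'$ are no longer part of $\gamma$. So the leaf pattern on $\gamma$ is new and has to be checked directly.
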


Lemma~\ref{lemma:decomposition} is the key property that makes sutured decompositions useful for constructing foliations: it allows us to reduce the problem to building a foliation on the simplest pieces and then reconstructing the foliation on the original manifold step by step.

\section{Construction of Meridional Sutured Handlebodies}
\label{sec:msh}

We first define a structure graph of a handlebody.

\begin{definition}\label{def:structure-graph}
  Let $H$ be a handlebody of genus $g$, and let $D$ be a collection of properly embedded, pairwise disjoint disks in $H$.  The pair $(H,D)$ determines a graph $\Gamma(H,D)$, the \emph{structure graph} of $H$ relative to $D$.  The components of
  $H\setminus\bigcup_{D_i\in D} D_i$ are sub-handlebodies, corresponding to the vertices of $\Gamma(H,D)$; the disks in $D$ correspond to edges; and the genus of each sub-handlebody is assigned as a weight to its corresponding vertex.
\end{definition}

A vertex $v\in\Gamma(H,D)$ is \textit{isolated} if $\operatorname{edge}(v)=1$ and $\operatorname{weight}(v)=0$.

\begin{lemma}\label{lemma:isolated}
  For the pair $(H,D)$, every disk $D_i\in D$ is essential if and only if the graph $\Gamma(H,D)$ contains no isolated vertices.
\end{lemma}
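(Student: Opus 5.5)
We prove both implications through their contrapositives, working one disk at a time. The fact we use throughout is that a properly embedded disk $D_i$ in a handlebody is inessential exactly when $\partial D_i$ bounds a disk $E\subset\partial H$. In that case $D_i\cup E$ is a $2$-sphere, which bounds a $3$-ball in the irreducible manifold $H$. The other ingredient is the standard fact that the complementary pieces of $H$ cut along disjoint disks are again handlebodies. Their genera satisfy additivity: if $D_i$ separates $H$ into pieces of genera $g_1,g_2$, then $g_1+g_2=g$. The weights in $\Gamma(H,D)$ are these genera.

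\emph{Isolated vertex $\Rightarrow$ inessential disk.} Suppose $v$ is isolated, with corresponding component $P$ of $H\setminus\bigcup D_j$. Since $\operatorname{edge}(v)=1$, the closure of $P$ meets the collection $D$ in exactly one disk $D_i$, and this disk appears on only one side. (A loop edge would count twice and would not give valence one.) Since $\operatorname{weight}(v)=0$, the piece $\overline P$ is a genus-$0$ handlebody, that is, a $3$-ball. Its boundary sphere is $D_i\cup(\partial\overline P\cap\partial H)$, so $\partial\overline P\cap\partial H$ is a disk $E$ with $\partial E=\partial D_i$. Hence $D_i$ is boundary-parallel, i.e. inessential.

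\emph{Inessential disk $\Rightarrow$ isolated vertex.} Suppose $D_i$ is inessential, so $\partial D_i$ bounds a disk $E\subset\partial H$. The sphere $D_i\cup E$ bounds a ball $B\subset H$. Every other disk $D_j$ is disjoint from $D_i$ and is properly embedded, so either $D_j\cap B=\emptyset$ or $D_j\subset B$ with $\partial D_j\subset E$.

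This step is the main obstacle, because $B$ may contain further disks of $D$, and then the vertex adjacent to $D_i$ need not be isolated. We handle it by passing to an innermost disk. Among all disks of $D$ contained in $B$ (including $D_i$ itself), each has boundary bounding a subdisk of $E$. Choose $D_k$ whose subdisk $E_k\subset E$ contains no other boundary curve $\partial D_j$; this is possible because the curves are disjoint and finite in number.

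Then $D_k\cup E_k$ bounds a ball $B_k$ that meets no other disk of $D$. Indeed, any disk of $D$ meeting $B_k$ would have its boundary inside $E_k$. So $B_k$ minus its frontier is a whole component of $H\setminus\bigcup D_j$. Its vertex has weight $0$ and exactly one edge, namely $D_k$; it is not a loop, since $D_k$ separates $B_k$ from the rest of $H$. Hence $\Gamma(H,D)$ has an isolated vertex.

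The statement is really "every disk essential $\iff$ no isolated vertex", so producing an isolated vertex from any inessential disk suffices for the contrapositive. This completes the plan.
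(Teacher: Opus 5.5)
Your proof is correct and complete. The paper states Lemma~\ref{lemma:isolated} without any proof, so there is no argument of the authors' to compare routes with; yours fills that gap.

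Two features of your argument are genuinely necessary rather than cosmetic.

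First, you adopt the convention that a loop contributes $2$ to $\operatorname{edge}(v)$. The statement needs this to be true at all. If a loop counted once, cut a solid torus along a single meridian disk. That gives a weight-$0$ vertex with one incident edge, hence an ``isolated'' vertex, even though the meridian disk is essential.

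Second, in the direction from an inessential disk to an isolated vertex, the isolated vertex need not be incident to the given disk. Take a handlebody of genus $g\ge 1$ and a boundary-parallel disk $D_1$ cutting off a ball $B$. Let $D_2\subset B$ be a parallel copy of $D_1$. Then $D_1$ joins a vertex of positive weight to a weight-$0$ vertex of degree $2$, and only the innermost ball is isolated. Your innermost-disk choice of $D_k$ is exactly what handles this. Equivalently, the pieces inside $B$ form a tree of weight-$0$ vertices attached to the rest of $\Gamma(H,D)$ by the single edge $D_i$, and a suitable leaf of that tree is isolated.
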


We now define the \textit{meridional sutured handlebody} and show that it can be decomposed into a union of one Reeb component and several product disks.

\begin{definition}\label{def:msh}
  A \textit{meridional sutured handlebody} is a sutured manifold $(H,\gamma)$, where $H$ is a handlebody of genus $g\ge 1$ and $\gamma$ (possibly empty) consists entirely of annular components. Furthermore, for each suture component $s_i\in s(\gamma)$ there exists a properly embedded disk $D_i\subset H$ with $\partial D_i = s_i$.  Let $D(\gamma)$ denote the collection of these pairwise disjoint disks. The manifold $(H,\gamma)$ is required to satisfy:
  \begin{enumerate}
    \item The structure graph $\Gamma(H,D(\gamma))$ contains no isolated vertices (see Figure~\ref{fig_2_1_sutured_handlebody});
    \item $\chi(R_+(\gamma)) = \chi(R_-(\gamma))$, with the convention that $\chi(\emptyset) = 0$.
  \end{enumerate}
  The complexity of $(H,\gamma)$ is $c(H,\gamma) = \bigl(g(H),\,\#s(\gamma)\bigr)$, ordered lexicographically.
\end{definition}

\begin{figure}[H]
  \centering
  \includegraphics[width=1\linewidth]{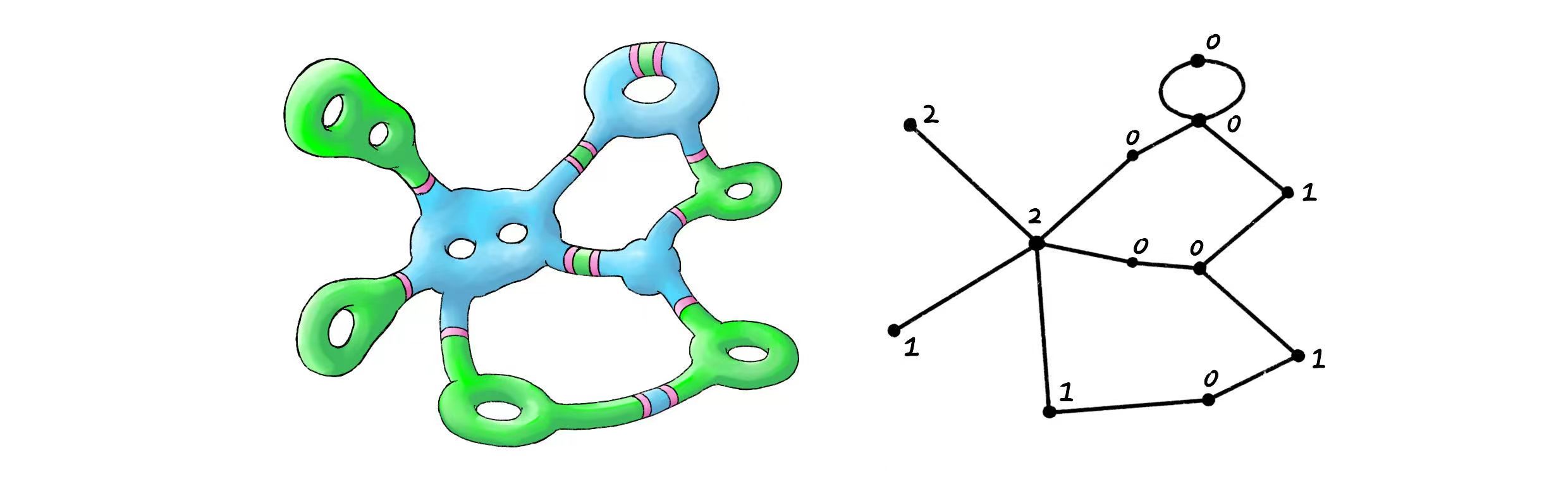}
  \caption{A meridional sutured handlebody and its structure graph}
  \label{fig_2_1_sutured_handlebody}
\end{figure}

\begin{remark}
  $c(\text{Reeb component}) = (1,0)$.
\end{remark}

\begin{lemma}\label{lemma:reeb}
  For any meridional sutured handlebody $(H,\gamma)$, the following are equivalent:
  \begin{enumerate}
    \item $\#s(\gamma) = 0$;
    \item $(H,\gamma)$ is a Reeb component.
  \end{enumerate}
\end{lemma}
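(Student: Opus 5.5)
The plan is to prove the two implications separately. Throughout, a ``Reeb component'' is understood as a sutured manifold in the sense of Example~\ref{ex:reeb}: a solid torus with $\gamma=\varnothing$ whose whole boundary torus is $R_+$ or $R_-$. Statement (2) is read at the level of sutured manifolds; the Reeb foliation itself is then the foliation the component carries.

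\emph{(2)$\Rightarrow$(1).} This direction is immediate. A Reeb component has $\gamma=\varnothing$ by Example~\ref{ex:reeb}, hence $s(\gamma)=\varnothing$ and $\#s(\gamma)=0$. This is consistent with the Remark $c(\text{Reeb component})=(1,0)$.

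\emph{(1)$\Rightarrow$(2).} Suppose $\#s(\gamma)=0$.
\begin{enumerate}
\item Since $\gamma$ consists only of annuli, each carrying exactly one suture, there are no annuli at all, so $\gamma=\varnothing$ and $D(\gamma)=\varnothing$.
\item $\partial H$ is a connected closed surface of genus $g$ (nonempty because $g\ge 1$). It is the disjoint union $R_+\sqcup R_-$, and no sutures separate it, so one of $R_\pm$ is all of $\partial H$ and the other is empty.
\item Condition~(2) of Definition~\ref{def:msh}, together with the convention $\chi(\emptyset)=0$, gives $\chi(\partial H)=2-2g=0$. Hence $g=1$, so $H$ is a solid torus.
\item Condition~(1) of Definition~\ref{def:msh} holds vacuously: the structure graph is a single vertex of weight $1$ with no edges, so it has no isolated vertex.
\end{enumerate}
Thus $(H,\gamma)$ is a solid torus with $\gamma=\varnothing$ and boundary entirely $R_+$ or entirely $R_-$. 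This is exactly the sutured manifold underlying a Reeb component, and it carries the Reeb foliation with the prescribed co-orientation. That foliation meets the requirements of the paper's definition: it is tangent to $R(\gamma)=\partial H$, and the conditions on $\gamma$ are empty.

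The only delicate point is the convention in step~3. One must use $\chi(\emptyset)=0$ to rule out $g\ge 2$ when all of $\partial H$ is a single $R_\pm$, since then one side has Euler characteristic $2-2g\neq 0$ while the other is empty. Everything else is bookkeeping from the definitions, so I expect no real obstacle.
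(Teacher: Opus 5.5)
Your argument is correct. The paper states this lemma without proof, treating it as immediate from Definition~\ref{def:msh} and Example~\ref{ex:reeb}, and your proposal supplies exactly that implicit check: $\#s(\gamma)=0$ forces $\gamma=\varnothing$, connectedness of $\partial H$ puts all of it in one of $R_\pm$, and $\chi(R_+)=\chi(R_-)$ with $\chi(\emptyset)=0$ forces $2-2g=0$, i.e.\ $g=1$. Your step~4 only re-verifies a hypothesis and adds nothing to the implication, but it is harmless.
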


\begin{proposition}\label{prop:msh-foliation}
Every meridional sutured handlebody $(H,\gamma)$ admits a foliation. Moreover, this foliation can be constructed from one Reeb component and several product disks via decomposition operations \ref{op:trivial},~\ref{op:disk} and~\ref{op:annular}.
\end{proposition}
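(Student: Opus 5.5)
We argue by induction on the complexity $c(H,\gamma)=(g(H),\#s(\gamma))$ in lexicographic order, with Lemma~\ref{lemma:decomposition} transporting foliations back up each decomposition. The base case is $\#s(\gamma)=0$. By Lemma~\ref{lemma:reeb}, $(H,\gamma)$ is then a Reeb component, which carries its standard foliation. In every other case the goal is a short sequence of Operations~\ref{op:trivial}, \ref{op:disk}, \ref{op:annular} producing either product disks or meridional sutured handlebodies of strictly smaller complexity.

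\textbf{Step 1: counting sutures.} Suppose $\#s(\gamma)=n\ge1$. The disks $D(\gamma)$ cut $H$ into pieces $V_1,\dots,V_k$, the vertices of $\Gamma=\Gamma(H,D(\gamma))$. Each boundary region $\partial V_j\setminus N(D(\gamma))$ is a component of $R(\gamma)$; it is a sphere of genus $g(V_j)$ with $\deg(v_j)$ holes. Hence
\[
\chi(R_\pm)=\sum_{v_j\in R_\pm}\bigl(2-2g(V_j)-\deg(v_j)\bigr).
\]
The orientation coherence of the sutures forces the $R_+$ and $R_-$ vertices to alternate across every edge, so $\Gamma$ is bipartite. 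Condition (2) of Definition~\ref{def:msh} then becomes a balance equation between the two colour classes. Since $\Gamma$ has $n$ edges, $k$ vertices and first Betti number $b$, with $g(H)=b+\sum_j g(V_j)$, this balance equation, together with the absence of isolated vertices, should force at least one of the following.
\begin{enumerate}
\item[(a)] Some edge $D_i$ is non-separating, so $\Gamma$ has a cycle.
\item[(b)] Some vertex $V_j$ has positive genus.
\end{enumerate}
Indeed, if $\Gamma$ is a tree with all weights zero, then every leaf is isolated unless $\Gamma$ has no edges.

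\textbf{Step 2: reductions.} In case (a), take a disk $E$ dual to the non-separating $D_i$. It runs once through the annulus $N(D_i)$ and is otherwise disjoint from the other $D_l$, so $|\partial E\cap s(\gamma)|=2$ and Operation~\ref{op:trivial} applies. Cutting along $E$ lowers the genus by one. The cut annulus becomes a disk-like suture region that separates an $R_+$ piece from an $R_-$ piece along $E$. Both sides of $E$ lie in $R_+\cup R_-$ coherently, and $\chi(R_\pm)$ each drop by the same amount, namely one boundary arc pair. So the balance condition is preserved and the result is a meridional sutured handlebody of lower $g$, provided we then re-identify sutures with meridian disks.

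In case (b), take a vertex $V_j$ of positive genus and a handle of it. Push an essential annulus $A$ parallel to the boundary around a meridian of that handle, with $\partial A\subset R_\pm(V_j)$. Operation~\ref{op:annular} splits off a monkey saddle solid torus, which Example~\ref{ex:monkey-saddle} reduces to a product disk. Alternatively, cut by a meridian disk of that handle, which meets no sutures. In either version the remaining piece has lower genus and keeps the same sutures.

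If instead all pieces have genus zero and $\Gamma$ is a multi-edge between two vertices, then the configuration is Example~\ref{ex:meridional-solid-torus} or its higher-genus analogue. There a disk decomposition crossing all $2n$ sutures reduces $(H,\gamma)$ to product disks and a Reeb component.

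\textbf{Main obstacle.} The hard part is verifying that each reduced manifold is again a meridional sutured handlebody. Three things must hold: its new sutures must bound disjoint disks, the structure graph must have no isolated vertices (Lemma~\ref{lemma:isolated}), and $\chi(R_+)=\chi(R_-)$ must survive. Isolated vertices can appear after a trivial decomposition, when a vertex loses its only cycle. My plan is to remove them immediately by a further trivial decomposition, which amalgamates two parallel sutures bounding a ball, or by an annular decomposition that discards a product region. I would also check that the $\chi$ balance is preserved by these moves, since each one changes $\chi(R_+)$ and $\chi(R_-)$ symmetrically.

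The induction ends when the genus reaches $1$ and there are no sutures, which gives exactly one Reeb component. Along the way only product disks and monkey saddle tori are split off, and these reduce to product disks. This yields the "one Reeb component and several product disks" form of the statement.
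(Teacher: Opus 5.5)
\textbf{There is a genuine gap: the reductions in Step~2 do not stay inside the class of meridional sutured handlebodies, and the repair you sketch cannot work.} Your dichotomy in Step~1 (a cycle, or a vertex of positive weight) is correct.

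In case~(b), a boundary-parallel annulus $A$ cobounds a product region $A\times I$ with $\partial H$. What remains after decomposing along $A$ is therefore homeomorphic to $H$, so the genus does not drop. With one coorientation nothing changes; with the other you gain two new sutures along $\partial A$. Cutting along a meridian disk $E$ that misses $s(\gamma)$ is not Operation~2, which needs $n\ge 1$. It also does not keep the sutures: if $\partial E\subset R_+$, the definition of $\gamma'$ puts a new suture around $\partial S'_-$. Then $S'_-$ becomes a disk component of $R_-$, i.e.\ a weight-zero vertex of degree one.

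In case~(a), suppose an endpoint of $D_i$ is a weight-zero vertex of degree two. The half of $E$ in that ball meets its annular $R$-region in an arc with both ends on $s_i$. That arc cuts off a disk of the annulus, so one of the two new sub-balls is incident only to one half of $D_i$, and an isolated vertex is forced. A further trivial decomposition cannot delete it, since it cuts a non-separating disk and lowers genus. Also, ``two parallel sutures bounding a ball'' describes a degree-two vertex, not a degree-one one.

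Finally, a disk decomposition with $n\ge 1$ always creates sutures on the copies of the disk, and any disk boundary meets a meridional suture an even number of times. So your last paragraph of Step~2 cannot produce the sutureless Reeb piece.

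What is missing is the paper's ordering of cases. First, whenever $\Gamma(H,D(\gamma))$ has a weight-zero vertex of degree two, its $R$-region is an annulus between two parallel sutures $s_a,s_b$. Decompose along an annulus parallel to this region together with $s_a,s_b$, with $\partial A$ in the two adjacent regions of the opposite sign. Choose the coorientation so that the remaining piece receives a copy of $A$ of that opposite sign. This move does four things:
\begin{enumerate}
\item it splits off a monkey saddle torus;
\item it deletes $s_a$ and $s_b$;
\item it joins the neighbouring regions through the annulus, so $\chi(R_+)=\chi(R_-)$ persists and nothing becomes isolated;
\item it lowers $\#s$ by two at fixed genus.
\end{enumerate}
It is the only move in the argument that lowers $\#s$. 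A trivial decomposition replaces $s_i$ by the boundaries of the two halves of $D_i$, so $\#s$ rises by one while $g$ drops; $\chi(R_+)$ and $\chi(R_-)$ each rise by one, not fall as you wrote.

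Only once every weight-zero vertex has degree at least three does the paper use trivial decompositions. These are done at a positive-weight vertex (Case~2), or using a cycle to guarantee non-separation (Case~3). The degree bound is exactly what lets the half-disk at each weight-zero endpoint split the remaining incident edges into two nonempty groups, so every new vertex has degree at least two. Without this step your induction has no mechanism to reach $\#s=0$ and no control over isolated vertices.
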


\begin{proof}
  We decompose $(H,\gamma)$ into a Reeb component and several product disks.  For convenience we denote the component under consideration during the decomposition process by $(H,\gamma)$.

  If $\#s(\gamma)=0$, then by Lemma~\ref{lemma:reeb} the manifold is a Reeb component, which carries a Reeb foliation, and the proof terminates.  Otherwise we perform one of the following operations:

  \medskip\noindent\textbf{Case 1.}
  Suppose there exists $v\in\Gamma(H,D(\gamma))$ with $\operatorname{edge}(v)=2$ and $\operatorname{weight}(v)=0$.

  We apply an annular decomposition (Figure~\ref{fig_1_5_operation_3}), which separates a monkey saddle
  solid torus (further decomposable into a disk); let $(H',\gamma')$ denote the remaining part.  The complexity decreases: $c(H',\gamma') < c(H,\gamma)$, because $g(H') = g(H)$ and $\#s(\gamma') = \#s(\gamma)-2$.

  There are two subcases, depicted in the left of Figure~\ref{fig_2_1_2_local_structure}.  In both, $\Gamma(H',D(\gamma'))$ contains no isolated vertices: removing the two edges incident to the degree-2 vertex (together with the vertex itself, whose weight is zero) cannot create new isolated vertices, because the two neighbouring vertices lose one incident edge each but retain at least one other edge by construction. Moreover, this operation reduces one annular component from either $R_+$ or $R_-$ and attaches it to the other, so $\chi(R_+(\gamma')) = \chi(R_-(\gamma'))$ holds. Hence $(H',\gamma')$ satisfies Definition~\ref{def:msh}.

  \begin{figure}[H]
    \centering
    \includegraphics[width=0.9\linewidth]{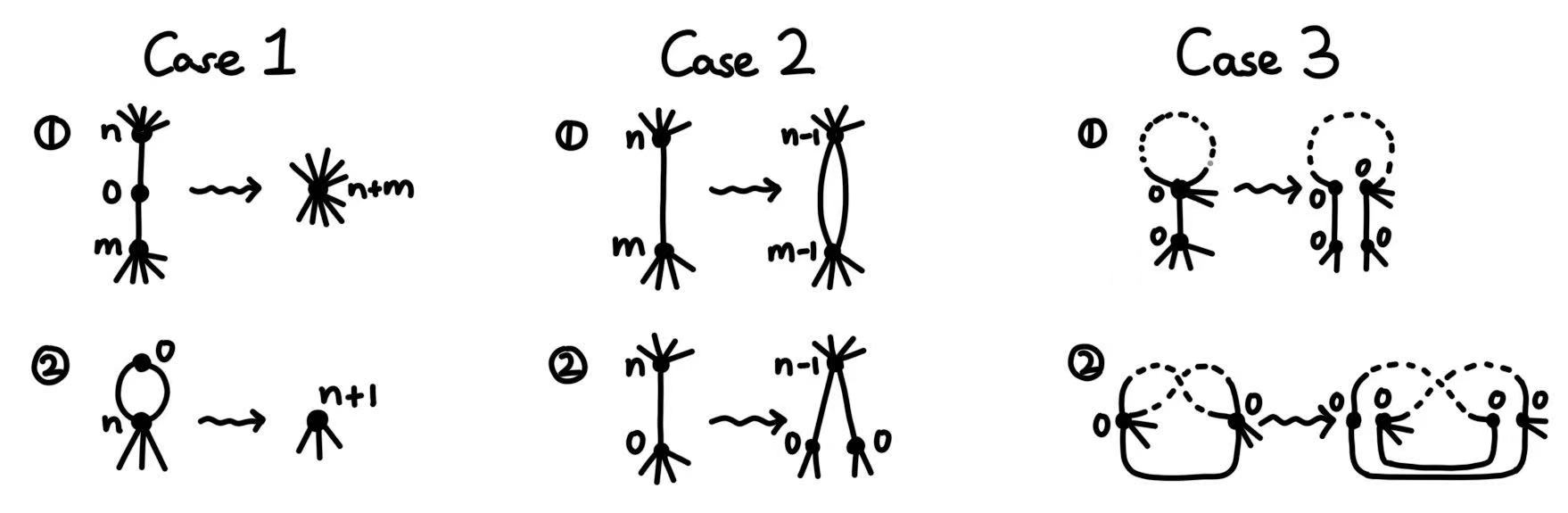}
    \caption{A pictorial illustration of the decomposition process}
    \label{fig_2_1_2_local_structure}
  \end{figure}

  \medskip\noindent\textbf{Case 2.}
  Suppose there exists $v\in\Gamma(H,D(\gamma))$ with $\operatorname{weight}(v)>0$, and Case~1 does not apply to any vertex.

  There are two feasible trivial decompositions, illustrated in Figure~\ref{fig_2_2_step_2}.  After performing the decomposition, let $(H',\gamma')$ denote the remaining part.  The complexity decreases: $c(H',\gamma') < c(H,\gamma)$, since $g(H') = g(H)-1$.

  The configurations are shown in the middle of Figure~\ref{fig_2_1_2_local_structure}. In both cases, $\Gamma(H',D(\gamma'))$ contains no isolated vertices, because the trivial decomposition modifies a pair of adjacent vertices (one of positive weight and one of its neighbors), so that the vertex undergoing the change has degree at least $2$. Again $\chi(R_+(\gamma')) = \chi(R_-(\gamma'))$ by symmetry of the decomposition.  Hence $(H',\gamma')$ satisfies Definition~\ref{def:msh}.

  \begin{figure}[H]
    \centering
    \includegraphics[width=0.7\linewidth]{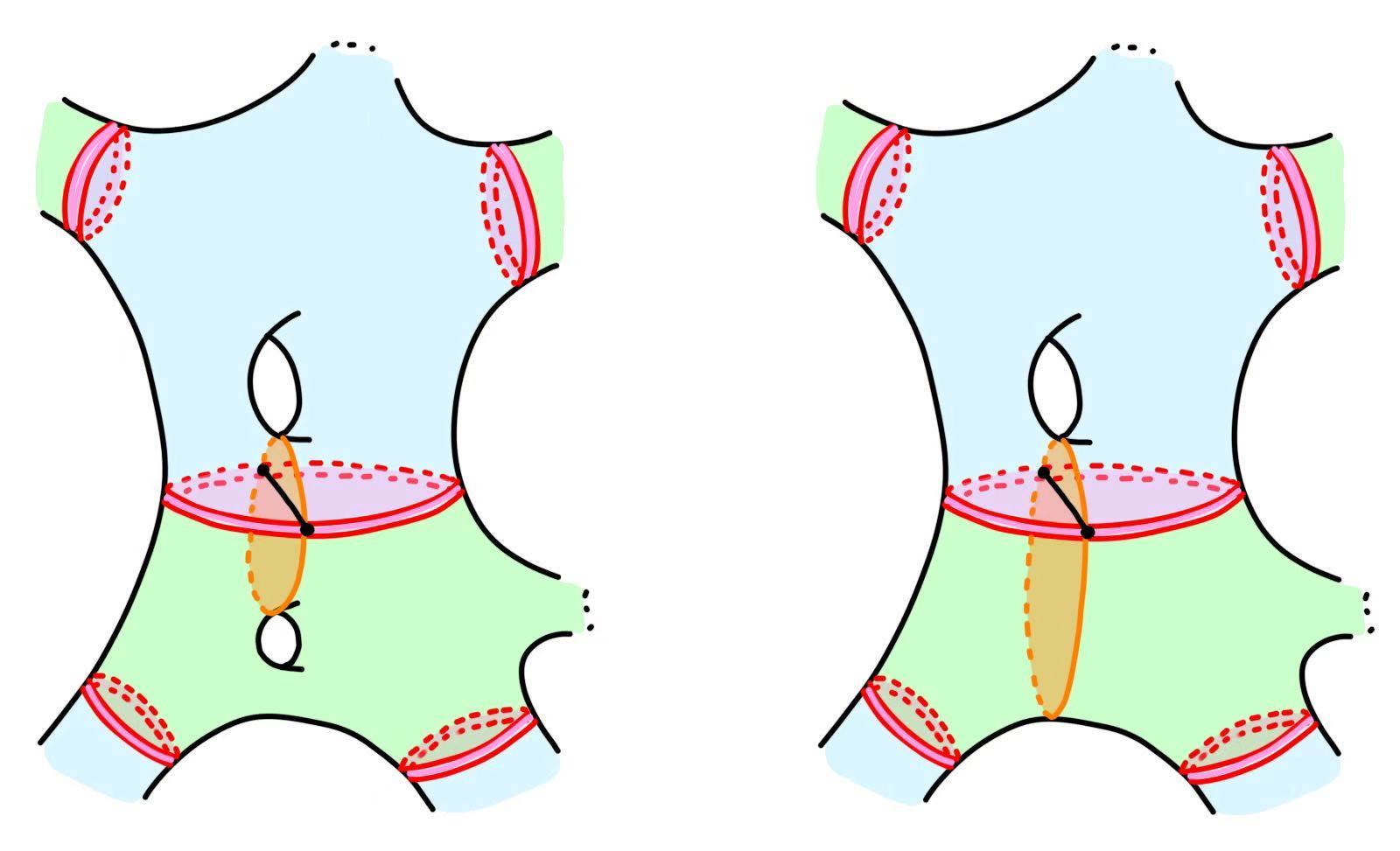}
    \caption{Trivial decompositions in Case~2}
    \label{fig_2_2_step_2}
  \end{figure}

  \medskip\noindent\textbf{Case 3.}
  Suppose that for every vertex $v\in\Gamma(H,D(\gamma))$ we have $\operatorname{edge}(v)>2$ and $\operatorname{weight}(v)=0$.

  Then $\Gamma(H,D(\gamma))$ must contain a cycle.  Let $C$ be such a cycle and choose a vertex $v$ on $C$.  Since $\operatorname{edge}(v)\ge 3$, there exists an edge incident to $v$ that does not belong to~$C$. Let $u$ be the vertex at the other end of this edge.  If $u$ is not on the cycle~$C$, we apply the trivial decomposition shown in the left panel of Figure~\ref{fig_2_3_step_3}; if $u$ lies on~$C$, we apply the one in the right panel.

  Let $(H',\gamma')$ denote the remaining part after the decomposition. The configurations are shown in the right of Figure~\ref{fig_2_1_2_local_structure}. The complexity decreases: $c(H',\gamma') < c(H,\gamma)$, since $g(H') = g(H)-1$.  In both subcases, $\Gamma(H',D(\gamma'))$ contains no isolated vertices: cutting along an edge not belonging to the cycle cannot render the remaining vertices isolated, because every vertex still belongs to a cycle or has degree $\ge 2$.  The Euler-characteristic equality $\chi(R_+(\gamma')) = \chi(R_-(\gamma'))$ is preserved by symmetry of the trivial decomposition.  Hence $(H',\gamma')$ satisfies Definition~\ref{def:msh}.

  \begin{figure}[H]
    \centering
    \includegraphics[width=0.8\linewidth]{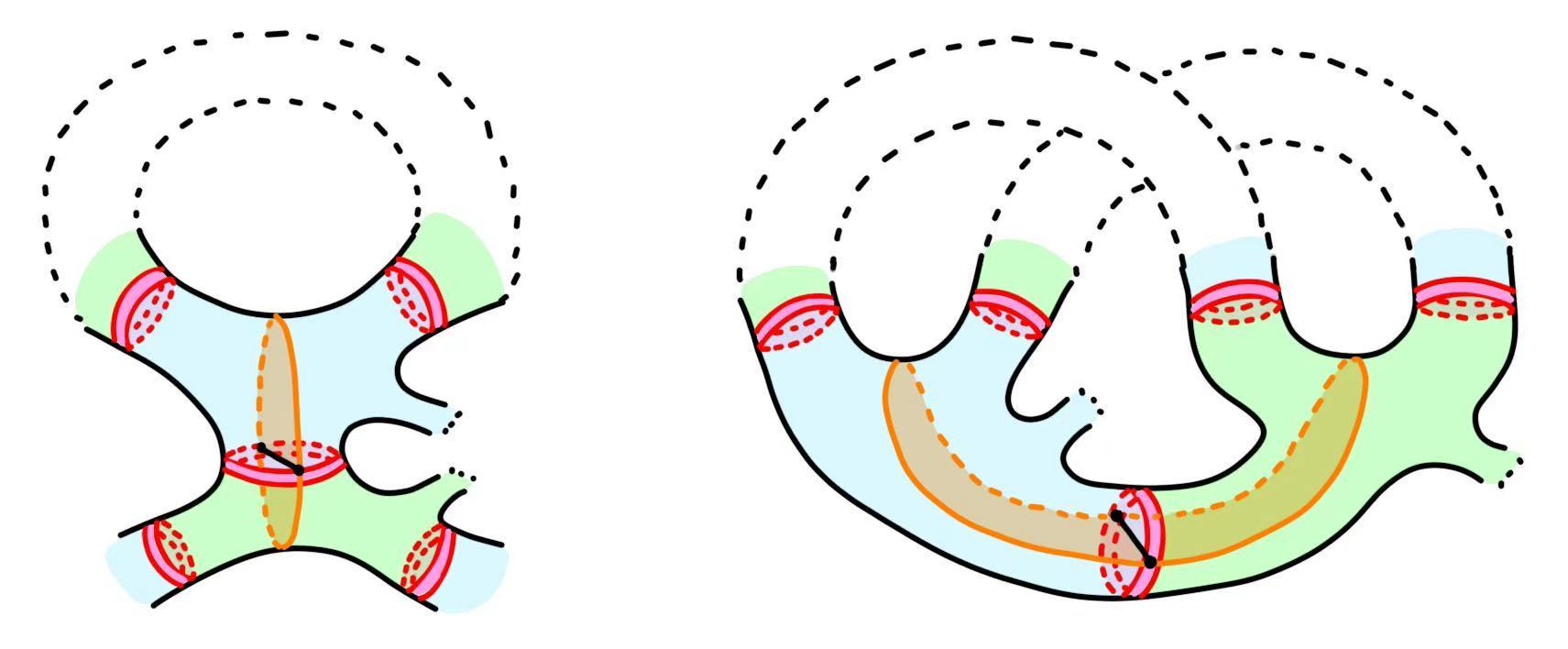}
    \caption{Trivial decompositions in Case~3}
    \label{fig_2_3_step_3}
  \end{figure}

  \medskip
  If $\#s(\gamma)\neq 0$ for a meridional sutured handlebody, then one of the above operations can always be performed.  By the well-ordering of the complexity, after finitely many steps we reach $\#s(\gamma)=0$, which is a Reeb component by Lemma~\ref{lemma:reeb}, together with several product disks generated along the way. Thus we construct a sequence of sutured manifold decompositions
  \[
    (H,\gamma) = (M_0,\gamma_0)
    \overset{S_1}{\leadsto} (M_1,\gamma_1)
    \overset{S_2}{\leadsto} \cdots
    \overset{S_n}{\leadsto} (M_n,\gamma_n)
  \]
  such that $(M_n,\gamma_n)$ is a Reeb component together with product disks.  By Lemma~\ref{lemma:decomposition}, $(H,\gamma)$ admits a foliation.
\end{proof}

\begin{remark}
  The existence of such a foliation can also be proved by other methods; here we aim to demonstrate the constructive nature of the foliation we construct.
\end{remark}

\section{Proof of the Main Theorem}
\label{sec:proof}

Suppose a closed, connected, orientable 3-manifold $M$ admits a Heegaard splitting $M=V\cup_{\Sigma} W$, where $\Sigma$ is equipped with a Heegaard diagram $D_0=(\Sigma, \alpha, \beta)$. Each genus-$g\ (g\ge 1)$ handlebody induces $2g-1$ curves on the diagram, including $g$ non-separating curves and $g-1$ separating curves that are pairwise non-parallel, as illustrated for the genus-two case in the upper part of Figure \ref{fig_3_1_Heegaard_diagram}. We assume that each component of $\Sigma\setminus D_0$ is an open disk, which can be achieved by a small perturbation of the original diagram. Note that a more general finger move method in \cite{SucharitWang2010} ensures that the complement of the perturbed diagram consists of open disks. This assumption is adopted merely for simplicity, while the subsequent construction remains fully valid even if $\Sigma\setminus D_0$ contains non-disk components. We construct a foliation of $M$ via the following three steps.

\subsection{Step~1: Construction of the handlebodies}

We first construct a new diagram $D'_0$ and then define two disjoint meridional sutured handlebodies $V'\sqcup W'\subset M$ (as submanifolds of $M$), as shown in the lower part of Figure \ref{fig_3_1_Heegaard_diagram}.

The diagram $D_0'=(\Sigma, \alpha', \beta')$ is constructed as follows. Let $\alpha=\{\alpha_1,\cdots,\alpha_g\}\cup\{\alpha_{g+1},\cdots,\alpha_{2g-1}\}$, where $\alpha_1,\cdots,\alpha_g$ denote nonseparating curves and $\alpha_{g+1},\cdots,\alpha_{2g-1}$ denote separating curves. We define $$\alpha'=\{\partial N(\alpha_1),\partial N(\alpha_2),\cdots,\partial N(\alpha_g),\alpha_{g+1},\cdots,\alpha_{2g-1}\},$$ which means that each nonseparating curve is replaced by a pair of parallel copies of the original curve. The same operation is applied to the set $\beta$ to obtain $\beta'$. Still each component of $\Sigma\setminus D'_0$ is an open disk.

And the meridional sutured handlebodies $V'\sqcup W'\subset M$ are defined as follows. Let $V'=\overline{V\setminus N(\Sigma)}$ and $W'=\overline{W\setminus N(\Sigma)}$. Then  $M$ admits the decomposition $M=(V'\sqcup W')\cup_{\Sigma\times \partial I}\Sigma\times I$ with $\Sigma\times\{0\}=\partial V'$ and $\Sigma\times\{1\}=\partial W'$.  

\begin{figure}[H]
    \centering
    \includegraphics[width=0.8\linewidth]{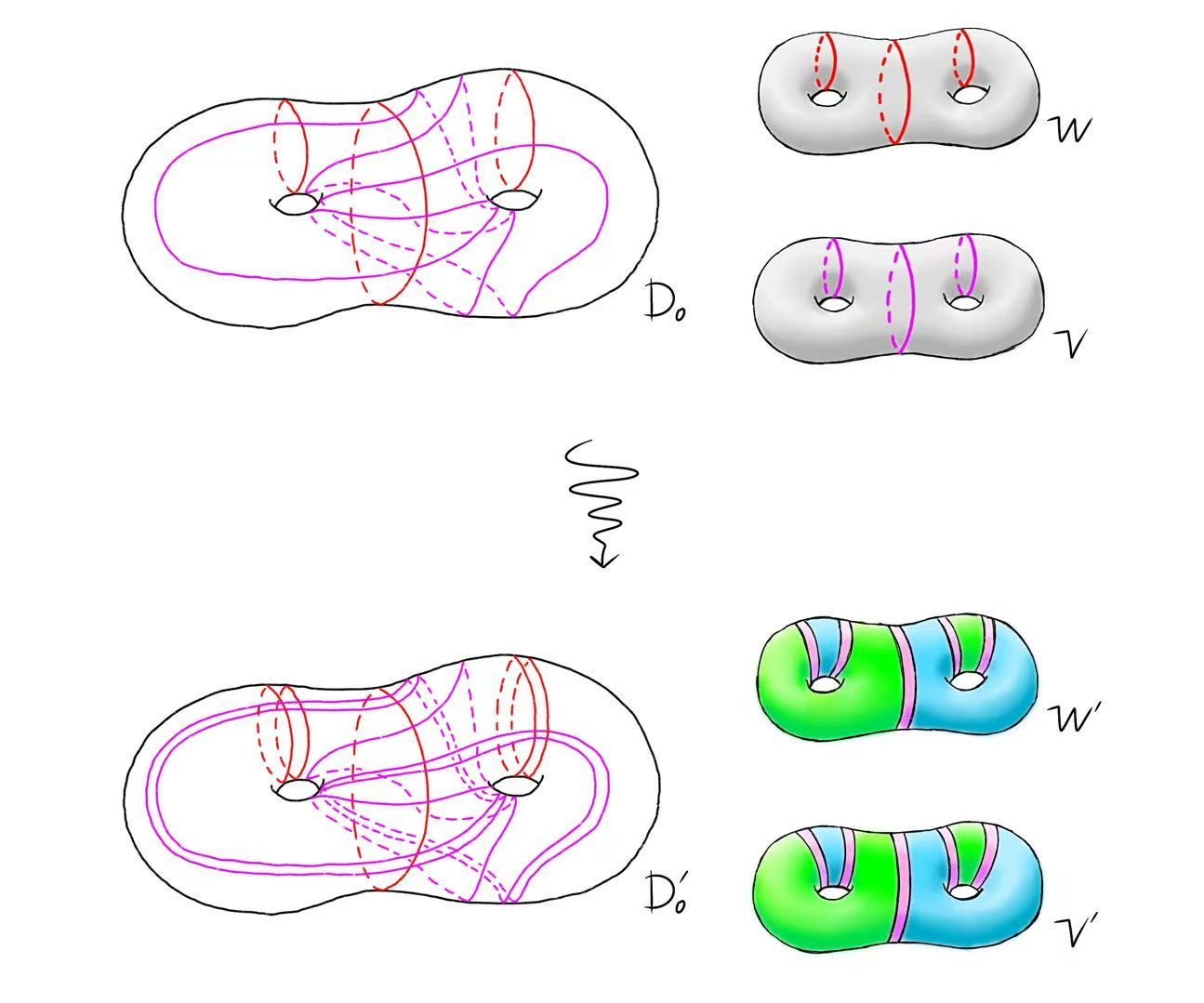}
    \caption{From handlebodies to meridional sutured handlebodies}
    \label{fig_3_1_Heegaard_diagram}
\end{figure}

Next, we equip these handlebodies with sutures. Since $\Sigma\times\{0\}=\partial V'$, the family of curves $\alpha'\times\{0\}$ lies on $\partial V'$. We define $(V',\gamma_V')$ by $\gamma_V'=N(\alpha'\times\{0\})$, and each component of $R(\gamma_V')=\partial V'\setminus\text{Int}{\gamma}_V'$ is orientable, where the vector field on $R_+(\gamma_V')$ points outward and that on $R_-(\gamma_V')$ points inward. These orientations are consistent with the intrinsic orientation of $s(\gamma_V')$. Similarly, since $\Sigma\times\{1\}=\partial W'$, the family of curves $\beta'\times\{1\}$ lies on $\partial W'$. We define $(W',\gamma_W')$ by $\gamma_W'=N(\beta'\times\{1\})$, and each component of $R(\gamma_W')=\partial W'\setminus\text{Int}{\gamma}_W'$ is orientable, with outward-pointing vector fields on $R_+(\gamma_W')$ and inward-pointing vector fields on $R_-(\gamma_W')$, whose orientations are compatible with those of $s(\gamma_W')$. For convenience in what follows, we denote the resulting sutured manifold by $(V'\sqcup W',\gamma')$. The above construction is illustrated in Figure~\ref{fig_3_1_Heegaard_diagram}.

So, both $V'$ and $W'$ are meridional sutured handlebodies.   In particular, it can be realized by applying the operation~\ref{op:trivial} to the $(4g+2)$-suture structure presented in Example \ref{ex:meridional-solid-torus}, as illustrated in Figure \ref{fig_3_2_handlebody}.

\begin{figure}[H]
    \centering
    \includegraphics[width=1\linewidth]{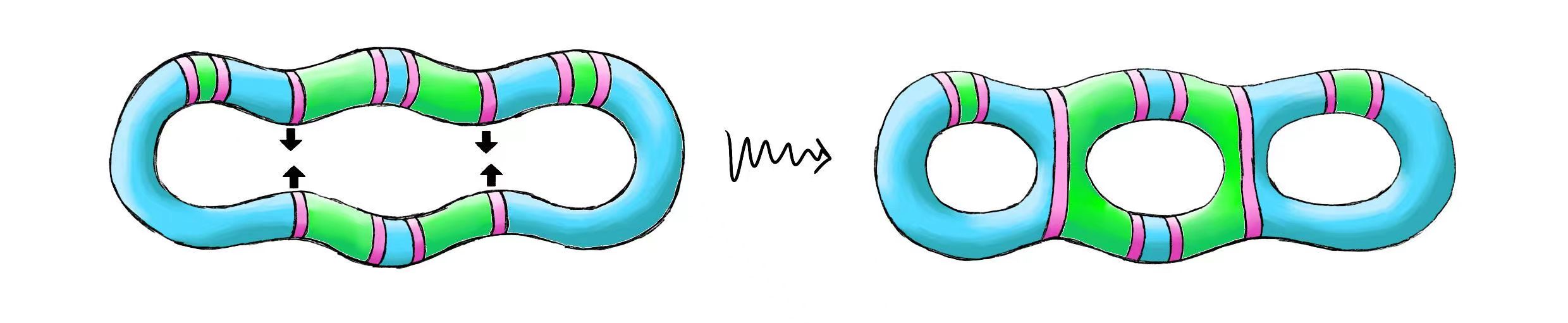}
    \caption{Construction of a genus-3 meridional sutured handlebody}
    \label{fig_3_2_handlebody}
\end{figure}

As a handlebody can be treated as a product I-bundle of a planar surface, we remark that alternative foliations for handlebodies are available, such as those induced by products of planar surfaces with $I$. Nevertheless, we adopt the aforementioned construction in this paper, since the resulting sutures are more closely correlated with the underlying Heegaard diagram. In particular, the genus-zero case can be regarded as two product disks glued together.

\subsection{Step~2: Gluing the handlebodies}

Remember that each region of $\Sigma\setminus D'_0$ is an open disk. Then for any open disk $d_0\subset \Sigma\setminus D'_0$, we classify it into two types according to the intersection property of its product boundary components with the $R(\gamma')$ regions: (I) The $d_0\times \{0\}$ and $d_0\times \{1\}$ have non-empty intersection with either $R_+(\gamma')$ or $R_-(\gamma')$; (II)
One of $d_0\times \{0\}$ and $d_0\times \{1\}$ has non-empty intersection with $R_+(\gamma')$, while the other has non-empty intersection with $R_-(\gamma')$. Note that both types of disks exist, since every four-valent vertex in $D_0'$ is surrounded by exactly two Type I disks and two Type II disks.

We apply the inverse operation of disk decomposition for a Type II disk, as demonstrated in Figure \ref{fig_3_3_step_2}. We select the disks \(d_V\) and \(d_W\) defined by \(d_V=((d_0\times\{0\})\cap R(\gamma'))\setminus N(\beta'\times \{0\})\) and \(d_W=((d_0\times\{1\})\cap R(\gamma'))\setminus N(\alpha'\times \{1\})\). Note that when performing this operation, we can slightly push \(d_V\) and \(d_W\) inward into \(d_0\), so that the $\gamma'$ on their boundaries are also pushed into the interior of \(d_0\). Consequently, the resulting manifold after this operation can be regarded as \(V'\cup W'\cup (U_x\times I)\), where \(x\in d_0\) and \(U_x\) denotes a sufficiently small neighborhood of $x$.

\begin{figure}[H]
    \centering
    \includegraphics[width=1\linewidth]{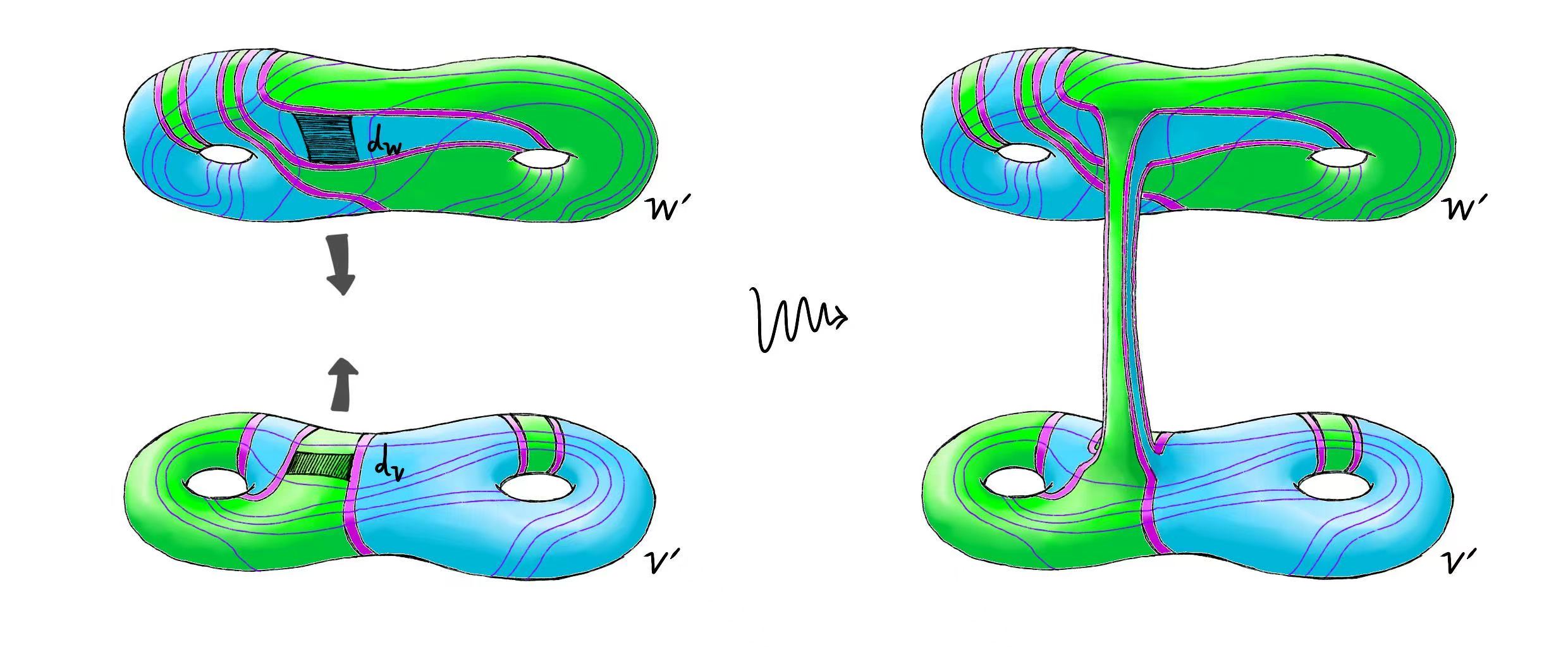}
    \caption{Local gluing of meridional sutured handlebodies}
    \label{fig_3_3_step_2}
\end{figure}

Note that the above local operation does not interfere with other Type II disks. Therefore, we can implement the aforementioned gluing operations sequentially or simultaneously for all Type II disk components. Denote the resulting submanifold of $M$ by $X$. Naturally, the resulting sutured manifold \((X,\gamma_X)\) admits a foliation, whose local geometry is illustrated in Figure~\ref{fig_3_3_2_step_2}. The depicted diagram is completed to a quadrilateral, and all other cases are handled analogously.

\begin{figure}[H]
    \centering
    \includegraphics[width=1\linewidth]{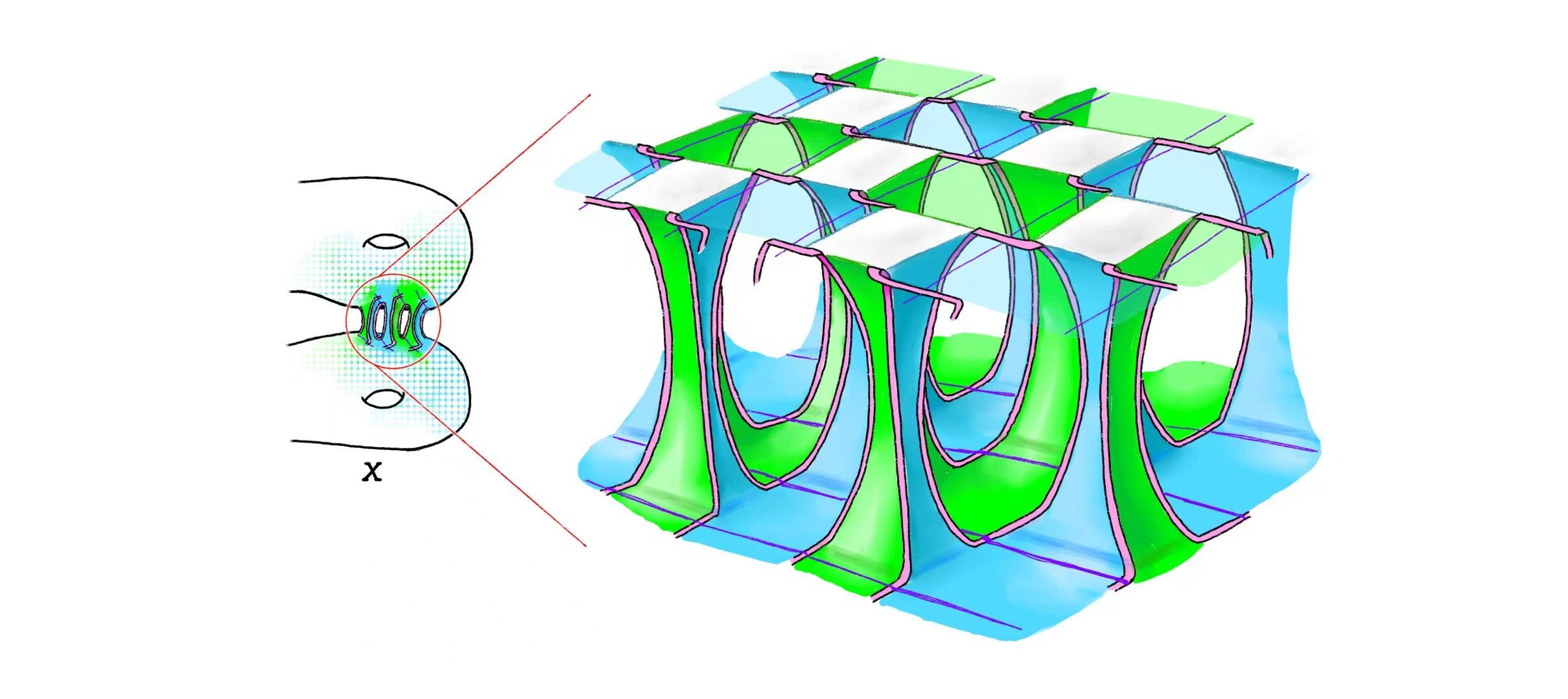}
    \caption{Local geometric configuration of $X$} 
    \label{fig_3_3_2_step_2} 
\end{figure}

\subsection{Step~3: Filling a meridional sutured handlebody and some Reeb components}

Let $H = M \setminus \operatorname{Int} N(X)$. Then the manifold $M$ admits the decomposition $M = (X \sqcup H) \cup_{\partial X \times \partial I} (\partial X \times I)$, where $\partial X \times \{0\} = \partial X$ and $\partial X \times \{1\} = \partial H$. Intuitively, $H$ can be regarded as a slight inward push-in of the complement of $X$.

We endow $H$ with a sutured manifold structure $(H, \gamma_H)$ as follows. If $\gamma_X \times \{0\} \subset \partial X \times \{0\}$ is the $\gamma_X$ of $X$, then we set $\gamma_H = \gamma_X \times \{1\}$ on $\partial H$. If $R_{\pm}(\gamma_X) \times \{0\} \subset \partial X \times \{0\}$ are the $R(\gamma_X)$ of $X$, then the $R(\gamma_H)$ of $H$ are defined by $R_{\mp}(\gamma_H) = R_{\pm}(\gamma_X) \times \{1\}$.

\begin{figure}[H]
    \centering
    \includegraphics[width=1\linewidth]{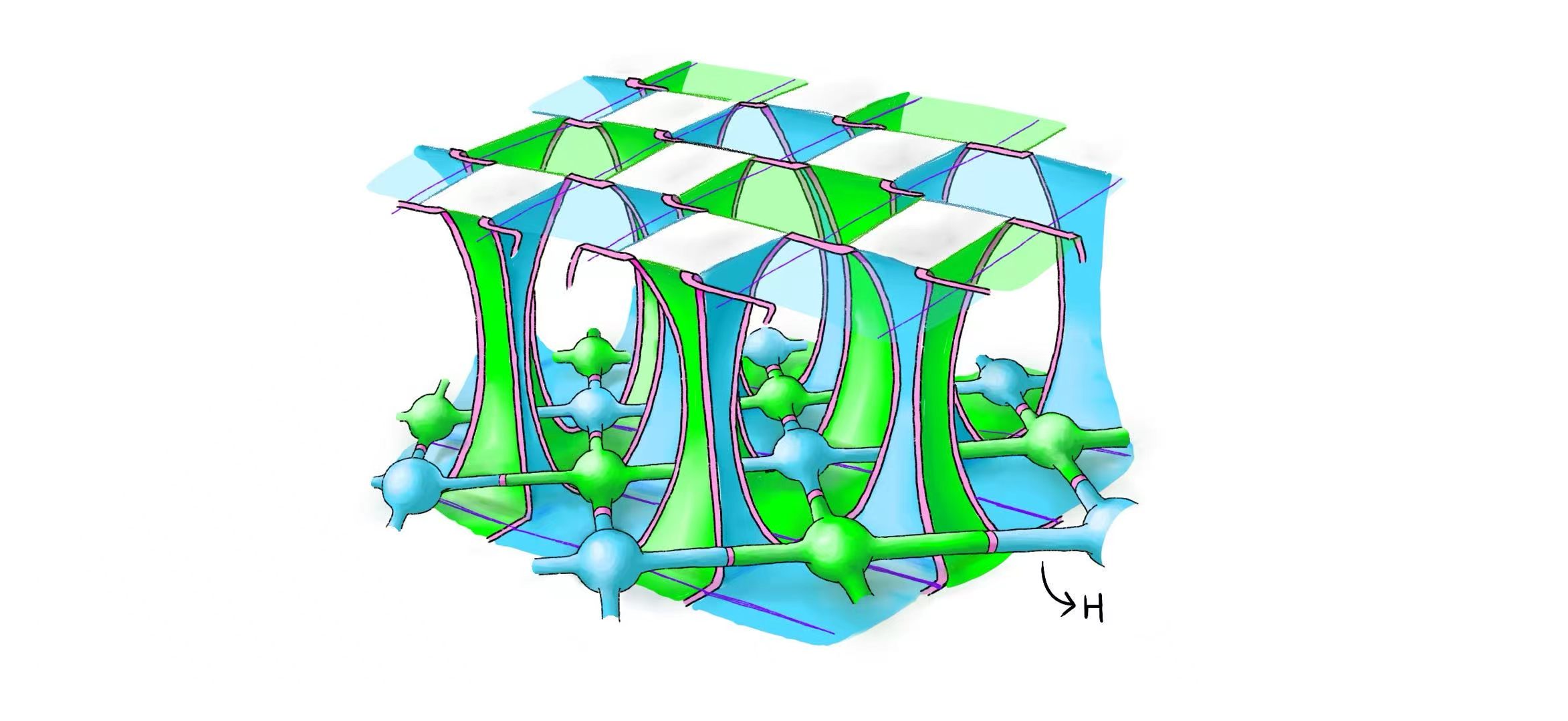}
    \caption{Filling a meridional sutured handlebody}
    \label{fig_3_4_step_3}
\end{figure}

The resulting sutured manifold $(H, \gamma_H)$ is depicted in Figure \ref{fig_3_4_step_3}; the diagram shown is completed to a quadrilateral, and all other cases are analogous. We first prove the following claim.

\begin{claim}
    $(H, \gamma_H)$ is a meridional sutured handlebody.
\end{claim}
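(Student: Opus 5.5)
We need to check three things about $(H,\gamma_H)$: that $H$ is a handlebody, that every suture bounds a properly embedded disk in $H$, and that conditions (1) and (2) of Definition~\ref{def:msh} hold. The plan is to describe $H$ explicitly in terms of the product region $\Sigma\times I$ and the diagram $D_0'$.

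First we identify $H$. Since $X = V'\cup W'\cup\bigcup_{\text{Type II}}(U_x\times I)$, its complement in $M$ is, up to pushing in, $\Sigma\times I$ minus a collection of vertical tubes $U_x\times I$, one for each Type~II region. Removing an open vertical solid cylinder from a thickened product adds a handle to the complement's boundary. So $H$ is obtained from the handlebody $(\Sigma\setminus\bigcup U_x)\times I$, which is a thickened punctured surface and hence a handlebody, by a small push-in. Thus $H$ is a handlebody. Its genus is positive, because the surface $\Sigma\setminus\bigcup U_x$ has nontrivial $\pi_1$ (it has genus $g\ge1$ or at least two punctures). Its structure is that of (surface)$\times I$, where the surface is decomposed into the regions $d_0$ of $\Sigma\setminus D_0'$.

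Next we locate the sutures. We follow $\gamma_X$ across Step~2. The sutures $\alpha'\times\{0\}$ and $\beta'\times\{1\}$ are cut and reconnected at each Type~II region by the reverse disk decomposition. Tracking this locally around a $4$-valent vertex (Figure~\ref{fig_3_3_2_step_2}), each Type~I region $d_0$ contributes a closed suture on $\partial H$ running around $\partial(d_0\times I)$: along $\partial d_0\times\{0\}$ and $\partial d_0\times\{1\}$, joined through the tubes. This suture bounds the vertical disk $d_0\times\{t\}$, pushed slightly into $H$. Each Type~II tube $\partial U_x\times I$ contributes a suture that bounds the disk $(\text{small arc})\times I$, or a meridian disk of the tube's neighbourhood, as in Figure~\ref{fig_3_4_step_3}. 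I would verify from the figure that these curves are exactly $s(\gamma_H)$ and that the chosen disks are pairwise disjoint. The disks $d_0\times\{1/2\}$ over the Type~I regions, together with the product disks $e\times I$ over the edges separating Type~I from Type~II regions, should form $D(\gamma_H)$.

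Then we check the two conditions. For (1), the structure graph $\Gamma(H,D(\gamma_H))$ is essentially the dual graph of the checkerboard colouring of $D_0'$: Type~I and Type~II regions alternate around each vertex. A vertex is isolated only if some complementary piece is a ball meeting just one disk. By Lemma~\ref{lemma:isolated}, this is equivalent to some disk being inessential, which is ruled out because each region is adjacent to at least two other regions, so no leaf of the graph has weight $0$. For (2), the reversal $R_\mp(\gamma_H)=R_\pm(\gamma_X)\times\{1\}$ gives $\chi(R_+(\gamma_H))=\chi(R_-(\gamma_X))$ and $\chi(R_-(\gamma_H))=\chi(R_+(\gamma_X))$. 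So it suffices to show $\chi(R_+(\gamma_X))=\chi(R_-(\gamma_X))$. This holds for $V'$ and for $W'$ separately, since they are meridional sutured handlebodies. Each Step~2 gluing is a reverse disk decomposition along a disk with two suture intersections, which removes one disk from each of $R_+$ and $R_-$ simultaneously and so preserves the equality.

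The main obstacle is the suture bookkeeping in the second step. One must show that after all the gluings, $\gamma_H$ consists only of annuli whose cores bound disjoint meridian disks, with no core inessential or non-meridional. I would do this by a local model at one $4$-valent vertex of $D_0'$, using the fact that there are two Type~I and two Type~II quadrants. Gluing these models along the edges of $D_0'$ then gives the global picture.
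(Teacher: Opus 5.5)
Your identification of $H$ with $(\Sigma\setminus\bigcup U_x)\times I$, and hence as a handlebody of positive genus, is correct; the paper leaves this implicit. Your Euler characteristic argument for condition~(2) is essentially the paper's. One small correction there: gluing over a Type~II $2n$-gon reverses a disk decomposition whose disk meets the sutures in $2n$ points, not two. It still lowers $\chi(R_+)$ and $\chi(R_-)$ by one each, so the conclusion stands.

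The genuine gap is the central step, identifying $s(\gamma_H)$ and $D(\gamma_H)$. You defer this to ``verify from the figure'', while predicting a configuration that is wrong.

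\begin{itemize}
\item A Type~I region $d_0$ is, by definition, one where $V'$ and $W'$ induce the same sign. So $d_0\times\{0\}$, $d_0\times\{1\}$ and the walls of the adjacent Type~II tubes facing $d_0$ all lie in one component of $R(\gamma_X)$. No suture runs around $\partial(d_0\times I)$, and $d_0\times\{1/2\}$ is not a suture disk. Your description is also internally inconsistent: $\partial(d_0\times\{t\})$ is a horizontal curve, not one running along both $\partial d_0\times\{0\}$ and $\partial d_0\times\{1\}$.
\item A meridian $\partial U_x\times\{t\}$ of a Type~II tube bounds $U_x\times\{t\}\subset X$, not a disk in $H$.
\item Each face $e\times I$ of a tube lies in $R(\gamma_X)$ away from its vertical corner edges, so $e\times I$ is not a suture disk either.
\end{itemize}
Consequently your structure graph (``essentially the dual graph'') and the adjacency criterion you apply to it concern the wrong graph.

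The missing idea is that the sutures are indexed by the $4$-valent vertices $p$ of $D_0'$, not by regions, tubes or edges. Around $p$ the quadrants alternate I, II, I, II. After pushing the sutures into the Type~II regions and reversing the disk decompositions, the suture $s_p$ consists of three pieces:
\begin{itemize}
\item a short arc of $\alpha'\times\{0\}$ through $p\times\{0\}$;
\item a short arc of $\beta'\times\{1\}$ through $p\times\{1\}$;
\item two vertical arcs joining them, running along the corners of the two Type~II tubes at $p$.
\end{itemize}
Thus $s_p$ meets $(\alpha'\cap\beta')\times\{0,1\}$ exactly in $p\times\{0\}$ and $p\times\{1\}$. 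Cut it at these two points. Each half, together with $p\times I$, bounds a disk in the neck of $M\setminus\operatorname{Int}X$ near $p\times I$, and the union of these two disks is a disk bounded by $s_p$.

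Cutting $H$ along these disks leaves one $3$-ball per Type~I region. So $\Gamma(H,D(\gamma_H))$ has the Type~I regions as vertices and the $4$-valent vertices of $D_0'$ as edges, each edge joining the two diagonally opposite Type~I quadrants. The degree of a vertex is then the number of corners of its polygon, which is even and positive, hence at least $2$. That count, not the number of distinct neighbouring regions, rules out isolated vertices. Your planned local model at a $4$-valent vertex is the right tool, but it has to produce this picture.
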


\begin{proof}
    By construction, $\gamma_H$ is a union of annuli. We first show that each suture of $s(\gamma_H)$ bounds a disk in $H$. See Figure~\ref{fig_3_5_claim} for an illustration of the construction below. Each suture component $s_i \in s(\gamma_X)$ has nonempty intersection with the set of 4-valent vertices of the diagram $D_0'$, i.e., $(\alpha'\cap\beta')\times \{0,1\}$. Without loss of generality, assume $p_i\times \{0\} \in s_i$. It follows  from the construction in Step 2 that $p_i\times \{1\} \in s_i$ as well, and
    \[
    s_i\cap((\alpha'\cap\beta')\times \{0,1\}) = \{p_i\times \{0\}, p_i\times \{1\}\}.
    \]
    Thus $s_i$ can be divided into two subarcs  $s_i = s_i' \cup s_i''$ such that $\partial s_i' = \partial s_i'' = \{p_i\times \{0\}, p_i\times \{1\}\}$. From Step 2, both $s_i' \cup (p_i\times I)$ and $s_i'' \cup (p_i\times I)$ bound disks in $M\setminus \text{Int}{X}$, and these two disks are disjoint. Hence $s_i$ bounds a disk in $M\setminus \text{Int}{X}$. By the definition of $(H, \gamma_H)$, every component $s_i \in s(\gamma_H)$ bounds a disk in $H$.

\begin{figure}[H]
    \centering
    \includegraphics[width=1\linewidth]{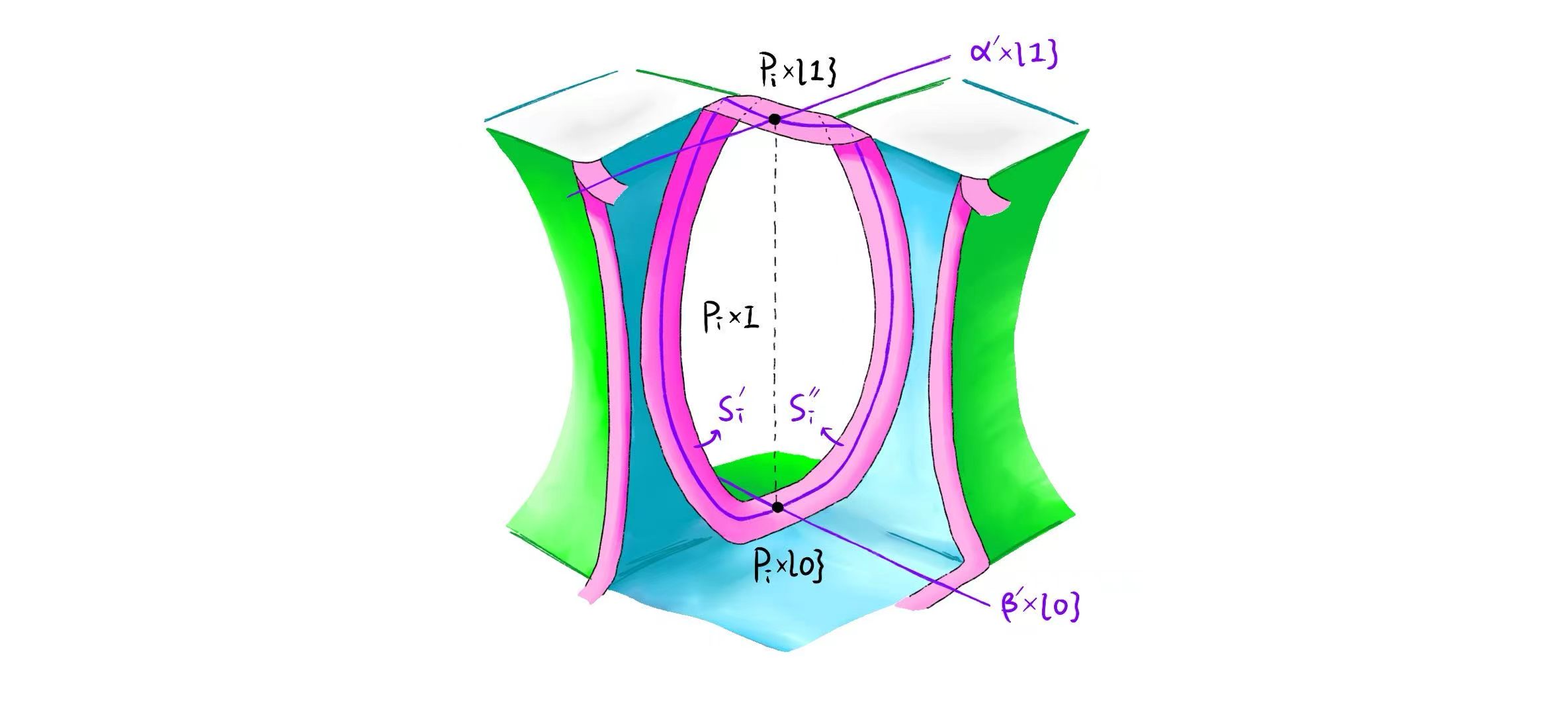}
    \caption{Each suture in $s(\gamma_X)$ bounds a disk in $M\setminus \text{Int}{X}$}
    \label{fig_3_5_claim}
\end{figure}

    It follows from the above argument that the suture set $s(\gamma_H)$ of $(H, \gamma_H)$ is in one-to-one correspondence with the 4-valent vertices of the diagram $D_0'$, as each 4-valent vertex is traversed by exactly one suture. Moreover, each component of $R(\gamma_H)$ is a planar surface formed by gluing the upper and lower copies of the Type I disk components. As shown in Figure \ref{fig_3_4_step_3}, cutting $H$ along the disks corresponding to $s(\gamma_H)$ yields a collection of 3-balls, which are in one-to-one correspondence with the Type I disks.

    Therefore the structure graph $\Gamma(H, D(\gamma_H))$ has vertices corresponding to the Type I disks of $D_0'$ and edges corresponding to the 4-valent vertices of $D_0'$. Every vertex of the structure graph is incident to at least two edges, since every polygon bounded by the diagram has an even number of edges. So, the structure graph $\Gamma(H, D(\gamma_H))$ contains no isolated vertices. Since $\chi(R_+(\gamma_X)) = \chi(R_-(\gamma_X))$ by the previous construction, the equality $\chi(R_+(\gamma_H)) = \chi(R_-(\gamma_H))$ holds immediately.
\end{proof}

By Proposition \ref{prop:msh-foliation}, $(H,\gamma_H)$ admits a foliation. We define \(\hat{X} = (X \cup H) \cup (R(\gamma_X) \times I)\).  Since \(R(\gamma_X) \times I\) admits a foliation of a product I-bundle,  with foliations on \(X\) and \(H\) constructed as above, it follows immediately that \(\hat{X}\) admits a foliation that is transverse to its boundary \(\partial \hat{X}\), which consists of toroidal components. 

Since \(M = \hat{X} \cup_{\partial \hat{X}} (\gamma_X \times I)\),  we apply the spinning construction to the transversely intersecting boundary components, and then equip each component of $\gamma_X \times I$ with a Reeb component. This gives a foliation on \(M\). 


\end{document}